\date{\today}
\newtheorem{theorem}{Theorem}[section]
\newtheorem{proposition}[theorem]{Proposition}%[section]
\newtheorem{corollary}[theorem]{Corollary}%[section]
\newtheorem{lemma}[theorem]{Lemma}
\theoremstyle{definition}
\newtheorem{example}[theorem]{Example}%[section]
\newtheorem{remark}[theorem]{Remark}%[section]
\newtheorem{definition}[theorem]{Definition}%[section]
\begin{document}

\title[Topological monoids of almost
monotone injective co-finite partial selfmaps]{Topological monoids
of almost monotone injective co-finite partial selfmaps of
positive integers}

\author[Ivan~Chuchman]{Ivan~Chuchman}
\address{Department of Mechanics and Mathematics, Ivan Franko Lviv National
University, Universytetska 1, Lviv, 79000, Ukraine}
\email{chuchman\underline{\hskip5pt}\,i@mail.ru}

\author[Oleg~Gutik]{Oleg~Gutik}
\address{Department of Mechanics and Mathematics, Ivan Franko Lviv National
University, Universytetska 1, Lviv, 79000, Ukraine}
\email{o\underline{\hskip5pt}\,gutik@franko.lviv.ua,
ovgutik@yahoo.com}

\keywords{Topological semigroup, semitopological semigroup,
semigroup of bijective partial transformations, closure, Baire
space}

\subjclass[2010]{Primary 22A15, 20M20. Secondary 20M18, 54H15}

\begin{abstract}
In this paper we study the semigroup
$\mathscr{I}_{\infty}^{\,\Rsh\!\!\!\nearrow}(\mathbb{N})$ of partial
co-finite almost monotone bijective transformations of the set of
positive integers $\mathbb{N}$. We show that the semigroup
$\mathscr{I}_{\infty}^{\,\Rsh\!\!\!\nearrow}(\mathbb{N})$ has
algebraic properties similar to the bicyclic semigroup: it is
bisimple and all of its non-trivial group homomorphisms are either
isomorphisms or group homomorphisms. Also we prove that every Baire
topology $\tau$ on
$\mathscr{I}_{\infty}^{\,\Rsh\!\!\!\nearrow}(\mathbb{N})$ such that
$(\mathscr{I}_{\infty}^{\,\Rsh\!\!\!\nearrow}(\mathbb{N}),\tau)$ is
a semitopological semigroup is discrete, describe the closure of
$(\mathscr{I}_{\infty}^{\,\Rsh\!\!\!\nearrow}(\mathbb{N}),\tau)$ in
a topological semigroup and construct non-discrete Hausdorff
semigroup topologies on
$\mathscr{I}_{\infty}^{\,\Rsh\!\!\!\nearrow}(\mathbb{N})$.
\end{abstract}

\maketitle

%\tableofcontents

\section{Introduction and preliminaries}

In this paper all spaces are assumed to be Hausdorff. Furthermore
we shall follow the terminology of \cite{CHK, CP, Engelking1989,
Ruppert1984}. By $\omega$ we shall denote the first infinite
cardinal.

An algebraic semigroup $S$ is called {\it inverse} if for any
element $x\in S$ there exists the unique $x^{-1}\in S$ such that
$xx^{-1}x=x$ and $x^{-1}xx^{-1}=x^{-1}$. The element $x^{-1}$ is
called the {\it inverse of} $x\in S$. If $S$ is an inverse
semigroup, then the function $\operatorname{inv}\colon S\to S$
which assigns to every element $x$ of $S$ its inverse element
$x^{-1}$ is called an {\it inversion}.

If $S$ is a semigroup, then by $E(S)$ we shall denote the
\emph{band} (i.~e. the subset of idempotents) of $S$. If the band
$E(S)$ is a non-empty subset of $S$, then the semigroup operation on
$S$ determines the partial order $\leqslant$ on $E(S)$: $e\leqslant
f$ if and only if $ef=fe=e$. This order is called {\em natural}. A
\emph{semilattice} is a commutative semigroup of idempotents. A
semilattice $E$ is called {\em linearly ordered} or \emph{chain} if
the semilattice operation admits a linear natural order on $E$. A
\emph{maximal chain} of a semilattice $E$ is a chain which is
properly contained in no other chain of $E$. The Axiom of Choice
implies the existence of maximal chains in any partially ordered
set. According to \cite[Definition~II.5.12]{Petrich1984} chain $L$
is called $\omega$-chain if $L$ is isomorphic to
$\{0,-1,-2,-3,\ldots\}$ with the usual order $\leqslant$. Let $E$ be
a semilattice and $e\in E$. We denote ${\downarrow} e=\{ f\in E\mid
f\leqslant e\}$ and ${\uparrow} e=\{ f\in E\mid e\leqslant f\}$. By
$(\mathscr{P}_{<\omega}(\mathbb{N}),\subseteq)$ we shall denote the
free semilattice with identity over the set of positive integers
$\mathbb{N}$.

If $S$ is a semigroup, then by $\mathscr{R}$, $\mathscr{L}$,
$\mathscr{D}$ and $\mathscr{H}$ the Green relations on $S$ (see
\cite{CP}):
\begin{align*}
    &\qquad a\mathscr{R}b \mbox{ if and only if } aS^1=bS^1;\\
    &\qquad a\mathscr{L}b \mbox{ if and only if } S^1a=S^1b;\\
    &\qquad \mathscr{D}=\mathscr{L}\circ\mathscr{R}=\mathscr{R}\circ\mathscr{L};\\
    &\qquad \mathscr{H}=\mathscr{L}\cap\mathscr{R}.
\end{align*}
A semigroup $S$ is called \emph{simple} if $S$ does not contain
proper two-sided ideals and \emph{bisimple} if all elements of $S$
are $\mathscr{D}$-equivalent.

A {\it semitopological} (resp. \emph{topological}) {\it semigroup}
is a topological space together with a separately (resp. jointly)
continuous semigroup operation.

Let $\mathscr{I}_\lambda$ denote the set of all partial one-to-one
transformations of a set $X$ of cardinality $\lambda$ together
with the following semigroup operation:
$x(\alpha\beta)=(x\alpha)\beta$ if
$x\in\operatorname{dom}(\alpha\beta)=\{
y\in\operatorname{dom}\alpha\mid
y\alpha\in\operatorname{dom}\beta\}$,  for
$\alpha,\beta\in\mathscr{I}_\lambda$. The semigroup
$\mathscr{I}_\lambda$ is called the \emph{symmetric inverse
semigroup} over the set $X$~(see \cite{CP}). The symmetric inverse
semigroup was introduced by Wagner~\cite{Wagner1952} and it plays
a major role in the theory of semigroups.

We denote
 $%\begin{equation*}
 \mathscr{I}_\lambda^n=\{
\alpha\in\mathscr{I}_\lambda\mid \operatorname{rank}\alpha\leqslant
n\},
 $%\end{equation*}
{} for $n=1,2,3,\ldots$. Obviously, $\mathscr{I}_\lambda^n$
($n=1,2,3,\ldots$) is an inverse semigroup, $\mathscr{I}_\lambda^n$
is an ideal of $\mathscr{I}_\lambda$ for each $n=1,2,3,\ldots$.
Further, we shall call the semigroup $\mathscr{I}_\lambda^n$ the
\emph{symmetric inverse semigroup of finite transformations of the
rank $n$}.

Let $\mathbb{N}$ be the set of all positive integers. By
$\mathscr{I}_{\infty}^{\!\nearrow}(\mathbb{N})$ we shall denote
the semigroup of monotone, non-decreasing, injective partial
transformations of $\mathbb{N}$ such that the sets
$\mathbb{N}\setminus\operatorname{dom}\varphi$ and
$\mathbb{N}\setminus\operatorname{rank}\varphi$ are finite for all
$\varphi\in\mathscr{I}_{\infty}^{\!\nearrow}(\mathbb{N})$.
Obviously, $\mathscr{I}_{\infty}^{\!\nearrow}(\mathbb{N})$ is an
inverse subsemigroup of the semigroup $\mathscr{I}_\omega$. The
semigroup $\mathscr{I}_{\infty}^{\!\nearrow}(\mathbb{N})$ is
called \emph{the semigroup of co-finite monotone partial
bijections} of $\mathbb{N}$~\cite{GutikRepovs2010?}.

We shall denote every element $\alpha$ of the semigroup
$\mathscr{I}_{\omega}$ by
 $
\left(%
\begin{array}{ccccc}
  n_1 & n_2 & n_3 & n_4 & \ldots \\
  m_1 & m_2 & m_3 & m_4 & \ldots \\
\end{array}%
\right)
 $
 and this means that $\alpha$ maps the
positive integer $n_i$ into $m_i$ for all $i=1,2,3,\ldots.$ We
observe that an element $\alpha$ of the semigroup
$\mathscr{I}_{\omega}$ is an element of the semigroup
$\mathscr{I}_{\infty}^{\!\nearrow}(\mathbb{N})$ if and only if it
satisfies the following conditions:
\begin{itemize}
    \item[$(i)$] the sets
    $\mathbb{N}\setminus\{n_1,n_2,n_3,n_4,\ldots\}$
    and $\mathbb{N}\setminus\{m_1, m_2, m_3, m_4, \ldots\}$ are
    finite;  and
    \item[$(ii)$] $n_1<n_2<n_3<n_4<\ldots$ and
    $m_1<m_2<m_3<m_4<\ldots$~.
\end{itemize}

A partial map $\alpha\colon \mathbb{N}\rightharpoonup \mathbb{N}$
is called \emph{almost monotone} if there exists a finite subset
$A$ of $\mathbb{N}$ such that the restriction
$\alpha\mid_{\mathbb{N}\setminus A}\colon \mathbb{N}\setminus
A\rightharpoonup \mathbb{N}$ is a monotone partial map.

By $\mathscr{I}_{\infty}^{\,\Rsh\!\!\!\nearrow}(\mathbb{N})$ we
shall denote the semigroup of monotone, almost non-decreasing,
injective partial transformations of $\mathbb{N}$ such that the
sets $\mathbb{N}\setminus\operatorname{dom}\varphi$ and
$\mathbb{N}\setminus\operatorname{rank}\varphi$ are finite for all
$\varphi\in\mathscr{I}_{\infty}^{\,\Rsh\!\!\!\nearrow}(\mathbb{N})$.
Obviously,
$\mathscr{I}_{\infty}^{\,\Rsh\!\!\!\nearrow}(\mathbb{N})$ is an
inverse subsemigroup of the semigroup $\mathscr{I}_\omega$ and the
semigroup $\mathscr{I}_{\infty}^{\!\nearrow}(\mathbb{N})$ is an
inverse subsemigroup of
$\mathscr{I}_{\infty}^{\,\Rsh\!\!\!\nearrow}(\mathbb{N})$ too. The
semigroup
$\mathscr{I}_{\infty}^{\,\Rsh\!\!\!\nearrow}(\mathbb{N})$ is
called \emph{the semigroup of co-finite almost monotone partial
bijections} of $\mathbb{N}$. We observe that an element $\alpha$
of the semigroup $\mathscr{I}_{\omega}$ is an element of the
semigroup
$\mathscr{I}_{\infty}^{\,\Rsh\!\!\!\nearrow}(\mathbb{N})$ if and
only if it satisfies conditions $(i)$ and $(iii)$:
\begin{itemize}
    \item[$(iii)$] there exists a positive integer $i$ such that
    $n_i<n_{i+1}<n_{i+2}<n_{i+3}<\ldots$ and
    $m_i<m_{i+1}<m_{i+2}<m_{i+3}<\ldots$~.
\end{itemize}

Further by $\mathbb{I}$ we shall denote the identity of the
semigroup
$\mathscr{I}_{\infty}^{\,\Rsh\!\!\!\nearrow}(\mathbb{N})$.

The bicyclic semigroup ${\mathscr{C}}(p,q)$ is the semigroup with
the identity $1$ generated by elements $p$ and $q$ subject only to
the condition $pq=1$. The bicyclic semigroup is bisimple and every
one of its congruences is either trivial or a group congruence.
Moreover, every non-annihilating homomorphism $h$ of the bicyclic
semigroup is either an isomorphism or the image of
${\mathscr{C}}(p,q)$ under $h$ is a cyclic group~(see
\cite[Corollary~1.32]{CP}). The bicyclic semigroup plays an
important role in algebraic theory of semigroups and in the theory
of topological semigroups. For example the well-known result of
Andersen~\cite{Andersen} states that a ($0$--)simple semigroup is
completely ($0$--)simple if and only if it does not contain the
bicyclic semigroup. The bicyclic semigroup admits only the
discrete topology and a topological semigroup $S$ can contain
${\mathscr C}(p,q)$ only as an open
subset~\cite{EberhartSelden1969}. Neither stable nor
$\Gamma$-compact topological semigroups can contain a copy of the
bicyclic semigroup~\cite{AHK, HildebrantKoch1988}. Also, the
bicyclic semigroup does not embed into a countably compact
topological inverse semigroup~\cite{GutikRepovs2007}. Moreover, in
\cite{BanakhDimitrovaGutik2009} and
\cite{BanakhDimitrovaGutik20??} the conditions were given when a
countable compact or pseudocompact topological semigroup does not
contain the bicyclic semigroup. However, Banakh, Dimitrova and
Gutik constructed with set-theoretic assumptions (Continuum
Hypothesis or Martin Axiom) an example of a Tychonoff countable
compact topological semigroup which contains the bicyclic
semigroup~\cite{BanakhDimitrovaGutik20??}.

Many semigroup theorists have considered a topological semigroup
of (continuous) transformations of an arbitrary topological space.
Be\u{\i}da~\cite{Beida1980}, Orlov~\cite{Orlov1974, Orlov1974a},
and Subbiah~\cite{Subbiah1987} have considered semigroup and
inverse semigroup  topologies of semigroups of partial
homeomorphisms of some classes of topological spaces.

Gutik and Pavlyk  \cite{GutikPavlyk2005} considered the special
case of the semigroup $\mathscr{I}_\lambda^n$: an infinite
topological semigroup of $\lambda\times\lambda$-matrix units
$B_\lambda$. They showed that an infinite topological semigroup of
$\lambda\times\lambda$-matrix units $B_\lambda$ does not embed
into a compact topological semigroup and that $B_\lambda$ is
algebraically $h$-closed in the class of topological inverse
semigroups. They also described the Bohr compactification of
$B_\lambda$, minimal semigroup and minimal semigroup inverse
topologies on $B_\lambda$.

Gutik, Lawson and Repov\v{s} \cite{GutikLawsonRepovs2009}
introduced the notion of a semigroup with a tight ideal series and
investigated their closures in semitopological semigroups,
particularly inverse semigroups with continuous inversion. As a
corollary they showed that the symmetric inverse semigroup of
finite transformations $\mathscr{I}_\lambda^n$ of infinite
cardinal $\lambda$ is algebraically closed in the class of
(semi)topological inverse semigroups with continuous inversion.
They also derived related results about the nonexistence of
(partial) compactifications of classes of considered semigroups.

Gutik and Reiter \cite{GutikReiter2009} showed that the
topological inverse semigroup $\mathscr{I}_\lambda^n$ is
algebraically $h$-closed in the class of topological inverse
semigroups. They also proved that a topological semigroup $S$ with
countably compact square $S\times S$ does not contain the
semigroup $\mathscr{I}_\lambda^n$ for infinite cardinals $\lambda$
and showed that the Bohr compactification of an infinite
topological semigroup $\mathscr{I}_\lambda^n$ is the trivial
semigroup.

In \cite{GutikReiter2010?} Gutik and Reiter showed that the
symmetric inverse semigroup of finite transformations
$\mathscr{I}_\lambda^n$ of infinite cardinal $\lambda$ is
algebraically closed in the class of semitopological inverse
semigroups with continuous inversion. There they described all
congruences on the semigroup $\mathscr{I}_\lambda^n$ and all compact
and countably compact topologies $\tau$ on $\mathscr{I}_\lambda^n$
such that $(\mathscr{I}_\lambda^n,\tau)$ is a semitopological
semigroup.

Gutik, Pavlyk and Reiter \cite{GutikPavlykReiter2009} showed that a
topological semigroup of finite partial bijections
$\mathscr{I}_\lambda^n$ of infinite set with a compact subsemigroup
of idempotents is absolutely $H$-closed. They proved that no
Hausdorff countably compact topological semigroup and no Tychonoff
topological semigroup with pseudocompact square contain
$\mathscr{I}_\lambda^n$ as a subsemigroup. They proved that every
continuous homomorphism from topological semigroup
$\mathscr{I}_\lambda^n$ into a Hausdorff countably compact
topological semigroup or Tychonoff topological semigroup with
pseudocompact square is annihilating. Also they gave sufficient
conditions for a topological semigroup $\mathscr{I}_\lambda^1$ to be
non-$H$-closed and showed that the topological inverse semigroup
$\mathscr{I}_\lambda^1$ is absolutely $H$-closed if and only if the
band $E(\mathscr{I}_\lambda^1)$ is compact
\cite{GutikPavlykReiter2009}.

In \cite{GutikRepovs2010?} Gutik and Repov\v{s} studied the
semigroup $\mathscr{I}_{\infty}^{\!\nearrow}(\mathbb{N})$ of partial
cofinite monotone bijective transformations of the set of positive
integers $\mathbb{N}$. They showed that the semigroup
$\mathscr{I}_{\infty}^{\!\nearrow}(\mathbb{N})$ has algebraic
properties similar to the bicyclic semigroup: it is bisimple and all
of its non-trivial group homomorphisms are either isomorphisms or
group homomorphisms. They proved that every locally compact topology
$\tau$ on $\mathscr{I}_{\infty}^{\!\nearrow}(\mathbb{N})$ such that
$(\mathscr{I}_{\infty}^{\!\nearrow}(\mathbb{N}),\tau)$ is a
topological inverse semigroup, is discrete and describe the closure
of $(\mathscr{I}_{\infty}^{\!\nearrow}(\mathbb{N}),\tau)$ in a
topological semigroup.

We remark that the bicyclic semigroup is isomorphic to the
semigroup $\mathscr{C}_{\mathbb{N}}(\pi,\sigma)$ which is
generated by partial transformations $\pi$ and $\sigma$ of the set
of positive integers $\mathbb{N}$, defined as follows:
\begin{equation*}
    (n)\pi=n+1  \quad \mbox{ if } \, n\geqslant 1, \qquad
    \mbox{and} \qquad
    (n)\sigma=n-1   \quad \mbox{ if } \, n> 1.
\end{equation*}
Therefore the semigroup
$\mathscr{I}_{\infty}^{\!\nearrow}(\mathbb{N})$ contains an
isomorphic copy of the bicyclic semigroup ${\mathscr{C}}(p,q)$.

In the present paper we study the semigroup
$\mathscr{I}_{\infty}^{\,\Rsh\!\!\!\nearrow}(\mathbb{N})$ of partial
co-finite almost monotone bijective transformations of the set of
positive integers $\mathbb{N}$. We show that the semigroup
$\mathscr{I}_{\infty}^{\,\Rsh\!\!\!\nearrow}(\mathbb{N})$ has
algebraic properties similar to the bicyclic semigroup: it is
bisimple and all of its non-trivial group homomorphisms are either
isomorphisms or group homomorphisms. Also we prove that every Baire
topology $\tau$ on
$\mathscr{I}_{\infty}^{\,\Rsh\!\!\!\nearrow}(\mathbb{N})$ such that
$(\mathscr{I}_{\infty}^{\,\Rsh\!\!\!\nearrow}(\mathbb{N}),\tau)$ is
a semitopological semigroup is discrete, describe the closure of
$(\mathscr{I}_{\infty}^{\,\Rsh\!\!\!\nearrow}(\mathbb{N}),\tau)$ in
a topological semigroup and construct non-discrete Hausdorff
semigroup topologies on
$\mathscr{I}_{\infty}^{\,\Rsh\!\!\!\nearrow}(\mathbb{N})$.

%%%%%%%%%%%%%%%%%%%%%%%%%%%%%%%%%%%%%%%%%%%%%%%%%%%%%%%%%%%%%%%%%%

\section{Algebraic properties of the semigroup
$\mathscr{I}_{\infty}^{\,\Rsh\!\!\!\nearrow}(\mathbb{N})$}

\begin{proposition}\label{proposition-2.1}
\begin{itemize}
    \item[$(i)$] An element $\alpha$ of the semigroup
         $\mathscr{I}_{\infty}^{\,\Rsh\!\!\!\nearrow}(\mathbb{N})$
         is an idempotent if and only if $(x)\alpha=x$ for every
         $x\in\operatorname{dom}\alpha$, and hence
         $E(\mathscr{I}_{\infty}^{\,\Rsh\!\!\!\nearrow}(\mathbb{N}))=
         E(\mathscr{I}_{\infty}^{\!\nearrow}(\mathbb{N}))$.

    \item[$(ii)$] If $\varepsilon,\iota\in
          E(\mathscr{I}_{\infty}^{\,\Rsh\!\!\!\nearrow}(\mathbb{N}))$,
          then $\varepsilon\leqslant\iota$ if and only if
          $\operatorname{dom}\varepsilon\subseteq
          \operatorname{dom}\iota$.

    \item[$(iii)$] The semilattice
          $E(\mathscr{I}_{\infty}^{\,\Rsh\!\!\!\nearrow}(\mathbb{N}))$
          is isomorphic to
          $(\mathscr{P}_{<\omega}(\mathbb{N}),\subseteq)$ under
          the mapping $(\varepsilon)h=\mathbb{N}\setminus
          \operatorname{dom}\varepsilon$.

    \item[$(iv)$] Every maximal chain in
          $E(\mathscr{I}_{\infty}^{\,\Rsh\!\!\!\nearrow}(\mathbb{N}))$
          is an $\omega$-chain.

    \item[$(v)$] For every $\varepsilon,\iota\in
          E(\mathscr{I}_{\infty}^{\,\Rsh\!\!\!\nearrow}(\mathbb{N}))$
          there exists $\alpha\in
          \mathscr{I}_{\infty}^{\,\Rsh\!\!\!\nearrow}(\mathbb{N})$
          such that $\alpha\alpha^{-1}=\varepsilon$ and
          $\alpha^{-1}\alpha=\iota$.

    \item[$(vi)$]
         $\mathscr{I}_{\infty}^{\,\Rsh\!\!\!\nearrow}(\mathbb{N})$
         is a simple semigroup.

    \item[$(vii)$] $\alpha\mathscr{R}\beta$ in
         $\mathscr{I}_{\infty}^{\,\Rsh\!\!\!\nearrow}(\mathbb{N})$
         if and only if
         $\operatorname{dom}\alpha=\operatorname{dom}\beta$.

    \item[$(viii)$] $\alpha\mathscr{L}\beta$ in
         $\mathscr{I}_{\infty}^{\,\Rsh\!\!\!\nearrow}(\mathbb{N})$
         if and only if
         $\operatorname{rank}\alpha=\operatorname{rank}\beta$.

    \item[$(ix)$] $\alpha\mathscr{H}\beta$ in
         $\mathscr{I}_{\infty}^{\,\Rsh\!\!\!\nearrow}(\mathbb{N})$
         if and only if
         $\operatorname{dom}\alpha=\operatorname{dom}\beta$ and
         $\operatorname{rank}\alpha=\operatorname{rank}\beta$.

    \item[$(x)$]
         $\mathscr{I}_{\infty}^{\,\Rsh\!\!\!\nearrow}(\mathbb{N})$
         is a bisimple semigroup.

\end{itemize}
\end{proposition}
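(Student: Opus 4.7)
The plan is to handle (i)–(iv) as structural facts about the idempotent semilattice, to establish (v) as the key transitivity lemma, and then to derive (vi)–(x) from (v) by routine inverse-semigroup bookkeeping.

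For (i), the usual characterization of idempotents in $\mathscr{I}_{\omega}$ as partial identities applies verbatim: $\alpha^{2}=\alpha$ together with injectivity forces $(x)\alpha=x$ on $\operatorname{dom}\alpha$. Such a partial identity on a cofinite subset of $\mathbb{N}$ is trivially monotone, placing it in $\mathscr{I}_{\infty}^{\!\nearrow}(\mathbb{N})$. Part (ii) follows because $\varepsilon\iota$ is always the partial identity on $\operatorname{dom}\varepsilon\cap\operatorname{dom}\iota$, so $\varepsilon\iota=\iota\varepsilon=\varepsilon$ iff $\operatorname{dom}\varepsilon\subseteq\operatorname{dom}\iota$. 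For (iii), the map $h$ sends cofinite domains to their finite complements and intertwines the meet of partial identities with the union of complements, giving a semilattice isomorphism onto $\mathscr{P}_{<\omega}(\mathbb{N})$. Part (iv) then reduces to the combinatorial fact that a maximal chain of finite subsets of $\mathbb{N}$ under $\subseteq$ enumerates $\mathbb{N}$ one element at a time and hence has order type $\omega$, yielding the required $\omega$-chain in $E$ through $h^{-1}$.

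The crux is (v). Given idempotents $\varepsilon,\iota$ with cofinite domains $A$ and $B$, I would take $\alpha$ to be the unique order-preserving bijection $A\to B$ built from the ordered enumerations of both sides; each enumeration has order type $\omega$ since $A$ and $B$ are cofinite and hence infinite. This $\alpha$ is strictly increasing, so it lies in $\mathscr{I}_{\infty}^{\!\nearrow}(\mathbb{N})\subseteq\mathscr{I}_{\infty}^{\,\Rsh\!\!\!\nearrow}(\mathbb{N})$, and by construction $\alpha\alpha^{-1}=\varepsilon$ and $\alpha^{-1}\alpha=\iota$.

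With (v) in hand, the remainder is standard inverse-semigroup bookkeeping. For (vii)–(ix), the forward implications are inherited by passing to the ambient $\mathscr{I}_{\omega}$, where $\mathscr{R}$, $\mathscr{L}$, $\mathscr{H}$ are controlled by domain and range; for the converses, if $\operatorname{dom}\alpha=\operatorname{dom}\beta$ then $\alpha=\beta(\beta^{-1}\alpha)$ and $\beta=\alpha(\alpha^{-1}\beta)$ with both cofactors automatically inside the (inverse, composition-closed) subsemigroup, and the $\mathscr{L}$-case is symmetric. Part (x) follows by applying (v) to the partial identities on $\operatorname{dom}\alpha$ and $\operatorname{rank}\beta$ to obtain $\gamma$ with $\gamma\mathscr{R}\alpha$ and $\gamma\mathscr{L}\beta$, whence $\alpha\mathscr{D}\beta$. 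Part (vi) follows from (x) via the monoid identity $\mathbb{I}$: the $\mathscr{D}$-witness $\gamma$ yields a two-sided factorization $\beta=t\alpha u$ with $t,u\in S$, so the two-sided ideal of any element fills $S$. The main pitfall is keeping all witnesses inside $\mathscr{I}_{\infty}^{\,\Rsh\!\!\!\nearrow}(\mathbb{N})$ rather than in the larger $\mathscr{I}_{\omega}$, but this is automatic since the subsemigroup is inverse and closed under the compositions that arise.
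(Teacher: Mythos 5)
Your proposal is correct and follows essentially the same route as the paper: the key witness in $(v)$ and $(x)$ is the same order-preserving bijection between the two cofinite domains, and $(vii)$--$(ix)$ reduce to the standard identification of $\mathscr{R}$ and $\mathscr{L}$ with equality of domains and ranges via $\alpha\alpha^{-1}$ and $\alpha^{-1}\alpha$. The only cosmetic difference is that the paper proves $(vi)$ by an explicit construction $\gamma\alpha\delta=\beta$ rather than deducing it from bisimplicity, but the factorization involved is the same one you describe.
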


\begin{proof}
Statements $(i)-(iv)$ are trivial and their proofs follow from the
definition of the semigroup
$\mathscr{I}_{\infty}^{\,\Rsh\!\!\!\nearrow}(\mathbb{N})$.

$(v)$ For the idempotents $\varepsilon=
\left(%
\begin{array}{ccccc}
  m_1 & m_2 & m_3 & m_4 & \ldots \\
  m_1 & m_2 & m_3 & m_4 & \ldots \\
\end{array}%
\right)
 $
and $\iota=
\left(%
\begin{array}{ccccc}
  l_1 & l_2 & l_3 & l_4 & \ldots \\
  l_1 & l_2 & l_3 & l_4 & \ldots \\
\end{array}%
\right)
 $
we put $\alpha=
\left(%
\begin{array}{ccccc}
  m_1 & m_2 & m_3 & m_4 & \ldots \\
  l_1 & l_2 & l_3 & l_4 & \ldots \\
\end{array}%
\right)
 $. Then $\alpha\alpha^{-1}=\varepsilon$ and
$\alpha^{-1}\alpha=\iota$.

$(vi)$ Let
 $
\alpha=
\left(%
\begin{array}{ccccc}
  n_1 & n_2 & n_3 & n_4 & \ldots \\
  m_1 & m_2 & m_3 & m_4 & \ldots \\
\end{array}%
\right)
 $
and
 $
\beta=
\left(%
\begin{array}{ccccc}
  k_1 & k_2 & k_3 & k_4 & \ldots \\
  l_1 & l_2 & l_3 & l_4 & \ldots \\
\end{array}%
\right)
 $
be any elements of the semigroup
$\mathscr{I}_{\infty}^{\,\Rsh\!\!\!\nearrow}(\mathbb{N})$, where
$n_i,m_i,k_i,l_i\in\mathbb{N}$ for $i=1,2,3,\ldots$~. We put
 $
\gamma=
\left(%
\begin{array}{ccccc}
  k_1 & k_2 & k_3 & k_4 & \ldots \\
  n_1 & n_2 & n_3 & n_4 & \ldots \\
\end{array}%
\right)
 $
and
 $
\delta=
\left(%
\begin{array}{ccccc}
  m_1 & m_2 & m_3 & m_4 & \ldots \\
  l_1 & l_2 & l_3 & l_4 & \ldots \\
\end{array}%
\right)
 $.
Then we have that $\gamma\alpha\delta=\beta$. Therefore
$\mathscr{I}_{\infty}^{\,\Rsh\!\!\!\nearrow}(\mathbb{N})
\cdot\alpha\cdot
\mathscr{I}_{\infty}^{\,\Rsh\!\!\!\nearrow}(\mathbb{N})=
\mathscr{I}_{\infty}^{\,\Rsh\!\!\!\nearrow}(\mathbb{N})$ for any
$\alpha\in\mathscr{I}_{\infty}^{\,\Rsh\!\!\!\nearrow}(\mathbb{N})$
and hence
$\mathscr{I}_{\infty}^{\,\Rsh\!\!\!\nearrow}(\mathbb{N})$ is a
simple semigroup.

$(vii)$ Let
$\alpha,\beta\in\mathscr{I}_{\infty}^{\,\Rsh\!\!\!\nearrow}(\mathbb{N})$
be such that $\alpha\mathscr{R}\beta$. Since
$\alpha\mathscr{I}_{\infty}^{\,\Rsh\!\!\!\nearrow}(\mathbb{N})=
\beta\mathscr{I}_{\infty}^{\,\Rsh\!\!\!\nearrow}(\mathbb{N})$ and
$\mathscr{I}_{\infty}^{\,\Rsh\!\!\!\nearrow}(\mathbb{N})$ is an
inverse semigroup, Theorem~1.17 \cite{CP} implies that
$\alpha\mathscr{I}_{\infty}^{\,\Rsh\!\!\!\nearrow}(\mathbb{N})=
\alpha\alpha^{-1}\mathscr{I}_{\infty}^{\,\Rsh\!\!\!\nearrow}(\mathbb{N})$,
$\beta\mathscr{I}_{\infty}^{\,\Rsh\!\!\!\nearrow}(\mathbb{N})=
\beta\beta^{-1}\mathscr{I}_{\infty}^{\,\Rsh\!\!\!\nearrow}(\mathbb{N})$
and $\alpha\alpha^{-1}=\beta\beta^{-1}$. Hence
$\operatorname{dom}\alpha=\operatorname{dom}\beta$.

Conversely, let
$\alpha,\beta\in\mathscr{I}_{\infty}^{\,\Rsh\!\!\!\nearrow}(\mathbb{N})$
be such that $\operatorname{dom}\alpha=\operatorname{dom}\beta$.
Then $\alpha\alpha^{-1}=\beta\beta^{-1}$. Since
$\mathscr{I}_{\infty}^{\,\Rsh\!\!\!\nearrow}(\mathbb{N})$ is an
inverse semigroup, Theorem~1.17 \cite{CP} implies that
$\alpha\mathscr{I}_{\infty}^{\,\Rsh\!\!\!\nearrow}(\mathbb{N})=
\alpha\alpha^{-1}\mathscr{I}_{\infty}^{\,\Rsh\!\!\!\nearrow}(\mathbb{N})=
\beta\mathscr{I}_{\infty}^{\,\Rsh\!\!\!\nearrow}(\mathbb{N})$ and
hence
$\alpha\mathscr{I}_{\infty}^{\,\Rsh\!\!\!\nearrow}(\mathbb{N})=
\beta\mathscr{I}_{\infty}^{\,\Rsh\!\!\!\nearrow}(\mathbb{N})$.

The proof of statement $(viii)$ is similar to $(vii)$.

Statement $(ix)$ follows from $(vii)$ and $(viii)$.

$(x)$ By statements $(vii)$ and $(viii)$ it is sufficient to show
that every distinct idempotents of the semigroup
$\mathscr{I}_{\infty}^{\,\Rsh\!\!\!\nearrow}(\mathbb{N})$ are
$\mathscr{D}$-equivalent. For the idempotents $\varepsilon=
\left(%
\begin{array}{ccccc}
  m_1 & m_2 & m_3 & m_4 & \ldots \\
  m_1 & m_2 & m_3 & m_4 & \ldots \\
\end{array}%
\right)
 $
and $\iota=
\left(%
\begin{array}{ccccc}
  l_1 & l_2 & l_3 & l_4 & \ldots \\
  l_1 & l_2 & l_3 & l_4 & \ldots \\
\end{array}%
\right)
 $
we put $\alpha=
\left(%
\begin{array}{ccccc}
  m_1 & m_2 & m_3 & m_4 & \ldots \\
  l_1 & l_2 & l_3 & l_4 & \ldots \\
\end{array}%
\right) $.
 Then by statements $(vii)$ and $(viii)$ we have that
$\varepsilon\mathscr{R}\alpha$ and $\alpha\mathscr{L}\iota$, and
hence $\varepsilon\mathscr{D}\iota$
\end{proof}

\begin{proposition}\label{proposition-2.2}
For every
$\alpha,\beta\in\mathscr{I}_{\infty}^{\,\Rsh\!\!\!\nearrow}(\mathbb{N})$,
both sets
 $
\{\chi\in\mathscr{I}_{\infty}^{\,\Rsh\!\!\!\nearrow}(\mathbb{N})\mid
\alpha\cdot\chi=\beta\}
 $
 and
 $
\{\chi\in\mathscr{I}_{\infty}^{\,\Rsh\!\!\!\nearrow}(\mathbb{N})\mid
\chi\cdot\alpha=\beta\}
 $
are finite. Consequently, every right translation and every left
translation by an element of the semigroup
$\mathscr{I}_{\infty}^{\,\Rsh\!\!\!\nearrow}(\mathbb{N})$ is a
finite-to-one map.
\end{proposition}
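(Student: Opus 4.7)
The plan is to exploit the explicit formula for composition in $\mathscr{I}_\omega$ to show that $\chi$ is forced to coincide with a fixed partial map on a cofinite set, leaving only finitely many choices on the remaining finite complement.

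First I would handle the set $L = \{\chi \mid \alpha\chi = \beta\}$. Writing out $\operatorname{dom}(\alpha\chi) = \{x \in \operatorname{dom}\alpha \mid x\alpha \in \operatorname{dom}\chi\}$ and comparing with $\operatorname{dom}\beta$ yields two necessary conditions: $\operatorname{dom}\beta \subseteq \operatorname{dom}\alpha$, and for every $x \in \operatorname{dom}\alpha$, $x\alpha \in \operatorname{dom}\chi$ if and only if $x \in \operatorname{dom}\beta$. Consequently $\operatorname{dom}\chi \cap \operatorname{rank}\alpha = (\operatorname{dom}\beta)\alpha$, which is cofinite in $\mathbb{N}$ because both $\operatorname{dom}\beta$ and $\operatorname{dom}\alpha$ are cofinite and $\alpha$ is injective. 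On this cofinite set the values of $\chi$ are completely pinned down by $(x\alpha)\chi = x\beta$, i.e.\ $\chi$ restricted to $(\operatorname{dom}\beta)\alpha$ coincides with $\alpha^{-1}|_{(\operatorname{dom}\beta)\alpha}\cdot\beta$, and its image there is exactly $\operatorname{rank}\beta$.

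The remaining freedom consists of choosing how to extend $\chi$ to some subset of the finite set $F_1 = \mathbb{N}\setminus(\operatorname{dom}\beta)\alpha$. Since $\chi$ must be injective and $\operatorname{rank}\beta$ is already exhausted by the determined part, every such extra point must be sent into the finite set $F_2 = \mathbb{N}\setminus\operatorname{rank}\beta$. Thus the number of admissible extensions is bounded by the number of partial injections $F_1 \rightharpoonup F_2$, which is finite; hence $|L| < \infty$. (The almost-monotone requirement only further restricts this finite list, so it plays no role in the counting.)

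The proof for $R = \{\chi \mid \chi\alpha = \beta\}$ is dual. From $\operatorname{dom}(\chi\alpha) = \operatorname{dom}\beta$ one deduces that $\operatorname{dom}\beta \subseteq \operatorname{dom}\chi$ and that $\chi$ is forced on $\operatorname{dom}\beta$ by $(x)\chi = (x\beta)\alpha^{-1}$, with image $(\operatorname{rank}\beta)\alpha^{-1}$, a cofinite subset of $\operatorname{dom}\alpha$. Any further elements of $\operatorname{dom}\chi$ must lie in the finite set $\mathbb{N}\setminus\operatorname{dom}\beta$ and, in order that $x\chi \notin \operatorname{dom}\alpha$ (so as not to enlarge $\operatorname{dom}(\chi\alpha)$), must be mapped into the finite set $\mathbb{N}\setminus\operatorname{dom}\alpha$; again only finitely many extensions exist. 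There is no real obstacle here beyond the bookkeeping that verifies both the determined parts are cofinite and the residual ambiguity lives between two finite sets; the final sentence of the proposition follows at once since left and right translations by $\alpha$ have finite preimages over every point.
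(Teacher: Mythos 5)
Your argument is correct and is essentially the paper's proof in expanded form: the paper left-multiplies by $\alpha^{-1}$ to observe that any solution $\chi$ of $\alpha\cdot\chi=\beta$ must agree with $\alpha^{-1}\cdot\beta$ on the cofinite set $\operatorname{dom}(\alpha^{-1}\alpha)$, which is exactly your observation that $\chi$ is pinned down on $(\operatorname{dom}\beta)\alpha$ with only finitely many admissible extensions between two finite sets. Your version merely spells out the counting of extensions (and the dual case) more explicitly, and your remark that cofiniteness alone, not almost-monotonicity, drives the finiteness is a fair sharpening of the paper's one-line justification.
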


\begin{proof}
We denote
$A=\{\chi\in\mathscr{I}_{\infty}^{\,\Rsh\!\!\!\nearrow}(\mathbb{N})
\mid \alpha\cdot\chi=\beta\}$ and
$B=\{\chi\in\mathscr{I}_{\infty}^{\,\Rsh\!\!\!\nearrow}(\mathbb{N})
\mid \alpha^{-1}\cdot\alpha\cdot\chi=\alpha^{-1}\cdot\beta\}$.
Then $A\subseteq B$ and the restriction of any partial map
$\chi\in B$ to $\operatorname{dom}(\alpha^{-1}\cdot\alpha)$
coincides with the partial map $\alpha^{-1}\cdot\beta$. Since
every partial map from the semigroup
$\mathscr{I}_{\infty}^{\,\Rsh\!\!\!\nearrow}(\mathbb{N})$ is
almost monotone (i.~e., almost non-decreasing) and co-finite, the
set $B$ is finite and hence so is $A$.
\end{proof}

For an arbitrary non-empty set $X$ we denote by
$\emph{\textsf{S}}_\infty(X)$ the group of all bijective
transformations of $X$ with finite supports (i.~e.,
$\alpha\in\emph{\textsf{S}}_\infty(X)$ if and only if the set
$\{x\in X\mid(x)\alpha\neq x\}$ is finite).

The definition of the semigroup
$\mathscr{I}_{\infty}^{\,\Rsh\!\!\!\nearrow}(\mathbb{N})$ implies
the following proposition:

\begin{proposition}\label{proposition-2.3}
Every maximal subgroup of the semigroup
$\mathscr{I}_{\infty}^{\,\Rsh\!\!\!\nearrow}(\mathbb{N})$ is
isomorphic to $\textsf{S}_\infty(\mathbb{N})$.
\end{proposition}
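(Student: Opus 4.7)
The strategy is to identify the maximal subgroup at an idempotent $\varepsilon$ of $\mathscr{I}_{\infty}^{\,\Rsh\!\!\!\nearrow}(\mathbb{N})$ with its group $\mathscr{H}$-class $H_\varepsilon$, and then to read off the structure of $H_\varepsilon$ from the characterization in Proposition~\ref{proposition-2.1}$(ix)$. Because $\mathscr{I}_{\infty}^{\,\Rsh\!\!\!\nearrow}(\mathbb{N})$ is bisimple by Proposition~\ref{proposition-2.1}$(x)$, all its maximal subgroups are pairwise isomorphic, so it suffices to describe the one sitting at the identity $\mathbb{I}$. By statement $(ix)$, $H_{\mathbb{I}}$ consists of those $\alpha\in\mathscr{I}_{\infty}^{\,\Rsh\!\!\!\nearrow}(\mathbb{N})$ with $\operatorname{dom}\alpha=\operatorname{rank}\alpha=\mathbb{N}$; that is, the almost monotone bijections of $\mathbb{N}$ onto itself.

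The heart of the argument is to show that any such $\alpha$ in fact acts as the identity outside some finite set. Choose a finite $A\subset\mathbb{N}$ so that $\alpha|_{\mathbb{N}\setminus A}$ is strictly increasing, and enumerate $\mathbb{N}\setminus A=\{n_1<n_2<\cdots\}$; then $\alpha(n_1)<\alpha(n_2)<\cdots$ is the strictly increasing enumeration of $\mathbb{N}\setminus\alpha(A)$. Put $k=|A|=|\alpha(A)|$, the equality holding because $\alpha$ is injective on $\mathbb{N}$. As soon as $n_i>\max A$, the disjoint sets $\{n_1,\ldots,n_{i-1}\}$ and $A$ together cover $\{1,\ldots,n_i-1\}$, forcing $n_i=i+k$; the parallel count applied to $\alpha(A)$ gives $\alpha(n_i)=i+k$ whenever $\alpha(n_i)>\max\alpha(A)$. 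Since both $n_i$ and $\alpha(n_i)$ tend to infinity with $i$, we obtain $\alpha(n_i)=n_i$ for all sufficiently large $i$, so $\alpha$ agrees with $\operatorname{id}_{\mathbb{N}}$ off a finite set. The reverse inclusion is clear: every permutation of $\mathbb{N}$ with finite support lies in $\mathscr{I}_{\infty}^{\,\Rsh\!\!\!\nearrow}(\mathbb{N})$ and belongs to $H_{\mathbb{I}}$. Thus $H_{\mathbb{I}}=\emph{\textsf{S}}_\infty(\mathbb{N})$, and bisimplicity transports this isomorphism to every maximal subgroup.

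The main obstacle lies in the combinatorial step that pins down $\alpha(n_i)=n_i$ for large $i$, where one has to compare two strictly increasing enumerations of cofinite subsets of $\mathbb{N}$ whose complements have the same finite cardinality. The almost-monotonicity condition $(iii)$ furnishes the enumerations, and global injectivity of $\alpha$ supplies the cardinality identity $|A|=|\alpha(A)|$; once both are in place, the remainder is a formal consequence of Proposition~\ref{proposition-2.1}.
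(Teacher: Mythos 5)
Your proof is correct: the identification of the maximal subgroups with the group $\mathscr{H}$-classes, the counting argument showing that an almost monotone bijection of $\mathbb{N}$ must eventually satisfy $\alpha(n_i)=i+k=n_i$, and the transport via bisimplicity are all sound. The paper gives no proof of this proposition (it is stated as following directly from the definition of $\mathscr{I}_{\infty}^{\,\Rsh\!\!\!\nearrow}(\mathbb{N})$), and your argument is a correct and complete verification of exactly that claim.
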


The semigroup
$\mathscr{I}_{\infty}^{\,\Rsh\!\!\!\nearrow}(\mathbb{N})$ contains
$\mathscr{I}_{\infty}^{\!\nearrow}(\mathbb{N})$ as a subsemigroup
and Theorem~2.9 of~\cite{GutikRepovs2010?} states that if $S$ is a
semigroup and
$h\colon\mathscr{I}_{\infty}^{\!\nearrow}(\mathbb{N})\rightarrow S$
is a non-annihilating homomorphism, then either $h$ is a
monomorphism or $(\mathscr{I}_{\infty}^{\!\nearrow}(\mathbb{N}))h$
is a cyclic subgroup of $S$. This arises the following problem:
\emph{To describe all homomorphisms of the semigroup
$\mathscr{I}_{\infty}^{\,\Rsh\!\!\!\nearrow}(\mathbb{N})$.}

The definition of the semigroup
$\mathscr{I}_{\infty}^{\,\Rsh\!\!\!\nearrow}(\mathbb{N})$ implies
the following proposition:

\begin{proposition}\label{proposition-2.4}
For every $\gamma\in
\mathscr{I}_{\infty}^{\,\Rsh\!\!\!\nearrow}(\mathbb{N})$ there
exists $n_\gamma\in\mathbb{N}$ such that
$i-n_\gamma=(i)\alpha-(n_\gamma)\alpha$ for all $i\geqslant
n_\gamma$, $i\in\mathbb{N}$.
\end{proposition}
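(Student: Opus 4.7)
The plan is to show that every $\gamma\in\mathscr{I}_{\infty}^{\,\Rsh\!\!\!\nearrow}(\mathbb{N})$ acts as a shift by a fixed constant on a sufficiently late tail of $\mathbb{N}$; the desired identity $i-n_\gamma=(i)\gamma-(n_\gamma)\gamma$ then drops out by subtraction (note that the ``$\alpha$'' in the statement is clearly a misprint for $\gamma$).

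First, I would combine conditions $(i)$ and $(iii)$ from the description of $\mathscr{I}_{\infty}^{\,\Rsh\!\!\!\nearrow}(\mathbb{N})$ to locate a threshold $N_0\in\mathbb{N}$ such that $\{N_0,N_0+1,\ldots\}\subseteq\operatorname{dom}\gamma$ and the restriction of $\gamma$ to $\{N_0,N_0+1,\ldots\}$ is strictly increasing. Indeed, $(iii)$ gives an index $i_0$ past which $n_{i_0}<n_{i_0+1}<\ldots$ is strictly increasing, while $(i)$ makes $\{n_{i_0},n_{i_0+1},\ldots\}$ a cofinite subset of $\mathbb{N}$; it suffices to take $N_0$ strictly larger than every element of its finite complement.

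Next I would analyse the image $I$ of $\{N_0,N_0+1,\ldots\}$ under $\gamma$. Since this image equals $\operatorname{rank}\gamma$ with the (finite) image of $\operatorname{dom}\gamma\cap\{1,\ldots,N_0-1\}$ under $\gamma$ removed, and since $\mathbb{N}\setminus\operatorname{rank}\gamma$ is itself finite by $(i)$, $I$ is cofinite in $\mathbb{N}$. Writing $I$ in increasing order as $i_1<i_2<\ldots$, the monotonicity of $\gamma$ on $\{N_0,N_0+1,\ldots\}$ forces $(N_0+k-1)\gamma=i_k$ for every $k\geqslant 1$. Now set $\mathbb{N}\setminus I=\{c_1<\ldots<c_s\}$. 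For any $k$ with $i_k>c_s$ the interval $\{1,\ldots,i_k\}$ contains exactly $s$ elements outside $I$ and hence $i_k-s$ elements of $I$, forcing $i_k=k+s$; this holds for all $k$ beyond some $k_0$. Taking $n_\gamma=N_0+k_0-1$, the equality $(N_0+k-1)\gamma=k+s$ rewrites as $(j)\gamma=j+(s+1-N_0)$ for every $j\geqslant n_\gamma$, from which $(i)\gamma-(n_\gamma)\gamma=i-n_\gamma$ follows at once.

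The only mild obstacle is extracting the threshold $N_0$ cleanly from the finitary bookkeeping in $(i)$ and $(iii)$; once we are safely on the monotone tail of $\gamma$, the remainder is a routine observation that the increasing enumeration of a cofinite subset of $\mathbb{N}$ eventually differs from its index by a constant equal to the number of omitted points.
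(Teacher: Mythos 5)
Your argument is correct: the paper gives no proof at all for this proposition (it is stated as an immediate consequence of the definition of $\mathscr{I}_{\infty}^{\,\Rsh\!\!\!\nearrow}(\mathbb{N})$), and your write-up is a complete and accurate verification of exactly that claim --- locating the monotone cofinite tail via conditions $(i)$ and $(iii)$, observing that an increasing bijection between two cofinite tails eventually shifts by the constant difference of the numbers of omitted points, and deducing the stated identity. You are also right that the ``$\alpha$'' in the statement is a misprint for $\gamma$.
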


\begin{lemma}\label{lemma-2.5}
For every $\gamma\in
\mathscr{I}_{\infty}^{\,\Rsh\!\!\!\nearrow}(\mathbb{N})$ there
exists an idempotent $\varepsilon\in
\mathscr{C}_{\mathbb{N}}(\pi,\sigma)$ such that
$\gamma\cdot\varepsilon,\varepsilon\cdot\gamma\in
\mathscr{C}_{\mathbb{N}}(\pi,\sigma)$. Consequently, for every
idempotent $\iota\in
\mathscr{I}_{\infty}^{\,\Rsh\!\!\!\nearrow}(\mathbb{N})$ there
exists $\varepsilon_0\in E(\mathscr{C}_{\mathbb{N}}(\pi,\sigma))$
such that
$\iota\cdot\varepsilon_0=\varepsilon_0\cdot\iota=\varepsilon_0$.
\end{lemma}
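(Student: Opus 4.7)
My plan is to extract from Proposition~\ref{proposition-2.4} the constant $c=(n_\gamma)\gamma-n_\gamma$, so that $\gamma$ acts as the pure shift $x\mapsto x+c$ on the tail $\{n_\gamma,n_\gamma+1,\ldots\}$, which (by the quoted statement) is contained in $\operatorname{dom}\gamma$. I then take $\varepsilon$ to be the bicyclic idempotent whose domain is a tail $\{N,N+1,\ldots\}$ for $N$ chosen large enough; such an $\varepsilon$ lies in $E(\mathscr{C}_{\mathbb{N}}(\pi,\sigma))$ since it equals $\sigma^{N-1}\pi^{N-1}$. The recipe for $N$ is to take it strictly larger than both $n_\gamma+|c|$ and the finite quantity $K:=\max\{(x)\gamma\mid x\in\operatorname{dom}\gamma,\,x<n_\gamma\}$ (with the convention $K=0$ if this set is empty).

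With this choice I would then compute the two products directly from the composition rule. For $\varepsilon\cdot\gamma$ the domain is $\{x\geqslant N\mid x\in\operatorname{dom}\gamma\}=\{N,N+1,\ldots\}$ since $N\geqslant n_\gamma$, and on this tail the map acts as $x\mapsto x+c$; hence $\varepsilon\cdot\gamma=\sigma^{N-1}\pi^{N+c-1}\in\mathscr{C}_{\mathbb{N}}(\pi,\sigma)$. For $\gamma\cdot\varepsilon$ the domain consists of those $x\in\operatorname{dom}\gamma$ with $(x)\gamma\geqslant N$; points $x<n_\gamma$ are excluded because $N>K$, and points $x\geqslant n_\gamma$ are included precisely when $x\geqslant N-c$, giving the tail $\{N-c,N-c+1,\ldots\}$ on which the map is again the shift by $c$. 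Thus $\gamma\cdot\varepsilon=\sigma^{N-c-1}\pi^{N-1}\in\mathscr{C}_{\mathbb{N}}(\pi,\sigma)$. The only delicate point is making sure no $x<n_\gamma$ slips into $\operatorname{dom}(\gamma\cdot\varepsilon)$ via some exceptional value $(x)\gamma\geqslant N$, and the bound $N>K$ is precisely what handles this finite obstruction.

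For the ``consequently'' clause, given an idempotent $\iota$, I apply the first part with $\gamma=\iota$ to obtain an idempotent $\varepsilon\in E(\mathscr{C}_{\mathbb{N}}(\pi,\sigma))$ with $\iota\cdot\varepsilon\in\mathscr{C}_{\mathbb{N}}(\pi,\sigma)$. Because $\mathscr{I}_{\infty}^{\,\Rsh\!\!\!\nearrow}(\mathbb{N})$ is inverse, its idempotents commute, so $\varepsilon_0:=\iota\cdot\varepsilon=\varepsilon\cdot\iota$ is itself an idempotent lying in $\mathscr{C}_{\mathbb{N}}(\pi,\sigma)$, hence in $E(\mathscr{C}_{\mathbb{N}}(\pi,\sigma))$. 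Then $\iota\cdot\varepsilon_0=\iota^2\cdot\varepsilon=\iota\cdot\varepsilon=\varepsilon_0$ and symmetrically $\varepsilon_0\cdot\iota=\varepsilon_0$, as required. I expect the main obstacle to be only the bookkeeping needed to verify that the chosen $N$ simultaneously produces bicyclic elements on both sides; once this threshold is set, the rest is immediate from Proposition~\ref{proposition-2.4} and the identification of bicyclic elements with shifts on cofinite tails.
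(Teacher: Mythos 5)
Your proposal is correct and follows essentially the same route as the paper: both take $\varepsilon$ to be the bicyclic idempotent supported on a sufficiently far-out tail determined by the $n_\gamma$ of Proposition~\ref{proposition-2.4}, and both derive the second assertion from commutativity of idempotents in an inverse semigroup. Your extra threshold $N>K$ is a harmless redundancy, since injectivity of $\gamma$ already forces every $x<n_\gamma$ in $\operatorname{dom}\gamma$ to satisfy $(x)\gamma<(n_\gamma)\gamma$, which is how the paper's choice $m_\gamma=\max\{n_\gamma,(n_\gamma)\gamma\}$ handles the same finite obstruction implicitly.
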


\begin{proof}
Let $n_\gamma\in\mathbb{N}$ be such as in the statement of
Proposition~\ref{proposition-2.4}. We put
$m_\gamma=\max\{n_\gamma, (n_\gamma)\gamma\}$ and define
\begin{equation*}
    \varepsilon=
\left(%
\begin{array}{cccc}
  m_\gamma & m_\gamma+1 & m_\gamma+2 & \cdots \\
  m_\gamma & m_\gamma+1 & m_\gamma+2 & \cdots \\
\end{array}%
\right).
\end{equation*}
Then we have that
$\gamma\cdot\varepsilon,\varepsilon\cdot\gamma\in
\mathscr{C}_{\mathbb{N}}(\pi,\sigma)$.

Let $\iota$ be an arbitrary idempotent of the semigroup
$\mathscr{I}_{\infty}^{\,\Rsh\!\!\!\nearrow}(\mathbb{N})$. By the
first assertion of the lemma there exists $\varepsilon\in
E(\mathscr{C}_{\mathbb{N}}(\pi,\sigma))$ such that
$\iota\cdot\varepsilon\in\mathscr{C}_{\mathbb{N}}(\pi,\sigma)$.
Since the semigroup
$\mathscr{I}_{\infty}^{\,\Rsh\!\!\!\nearrow}(\mathbb{N})$ is inverse
Theorem~1.17~\cite{CP} implies that
$\varepsilon_0=\iota\cdot\varepsilon=\varepsilon\cdot\iota$ is an
idempotent of $\mathscr{C}_{\mathbb{N}}(\pi,\sigma)$. Hence we have
that
$\iota\cdot\varepsilon_0=\varepsilon_0\cdot\iota=\varepsilon_0$.
\end{proof}

\begin{lemma}\label{lemma-2.6}
Let $S$ be  a semigroup and $h\colon
\mathscr{I}_{\infty}^{\,\Rsh\!\!\!\nearrow}(\mathbb{N})
\rightarrow S$ be a non-annihilating homomorphism such that the
set $\big(E(\mathscr{C}_{\mathbb{N}}(\pi,\sigma))\big)h$ is
singleton. Then
$\big(\mathscr{I}_{\infty}^{\,\Rsh\!\!\!\nearrow}(\mathbb{N})\big)h
=\big(\mathscr{C}_{\mathbb{N}}(\pi,\sigma)\big)h$.
\end{lemma}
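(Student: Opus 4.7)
The plan is to show first that every idempotent of $\mathscr{I}_{\infty}^{\,\Rsh\!\!\!\nearrow}(\mathbb{N})$ is sent by $h$ to the common value $e$ of $h$ on $E(\mathscr{C}_{\mathbb{N}}(\pi,\sigma))$, and then to use the inverse-semigroup factorization $\gamma=\gamma\cdot(\gamma^{-1}\gamma)$ together with Lemma~\ref{lemma-2.5} to push each $(\gamma)h$ into $\big(\mathscr{C}_{\mathbb{N}}(\pi,\sigma)\big)h$. I would begin by denoting the unique element of $\big(E(\mathscr{C}_{\mathbb{N}}(\pi,\sigma))\big)h$ by $e$; it is automatically an idempotent of $S$, and since $\mathbb{I}=\pi\sigma\in E(\mathscr{C}_{\mathbb{N}}(\pi,\sigma))$ we have $(\mathbb{I})h=e$.

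The first main step is to verify $(\iota)h=e$ for every idempotent $\iota\in E(\mathscr{I}_{\infty}^{\,\Rsh\!\!\!\nearrow}(\mathbb{N}))$ by playing two consequences of Lemma~\ref{lemma-2.5} against each other. On the one hand, its second assertion yields $\varepsilon_{0}\in E(\mathscr{C}_{\mathbb{N}}(\pi,\sigma))$ with $\iota\cdot\varepsilon_{0}=\varepsilon_{0}$, giving
\begin{equation*}
(\iota)h\cdot e=(\iota\cdot\varepsilon_{0})h=(\varepsilon_{0})h=e.
\end{equation*}
On the other hand, from $\iota\cdot\mathbb{I}=\iota$ one obtains $(\iota)h\cdot e=(\iota)h$. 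Comparing the two computations forces $(\iota)h=e$.

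For an arbitrary $\gamma\in\mathscr{I}_{\infty}^{\,\Rsh\!\!\!\nearrow}(\mathbb{N})$, the previous step applied to the idempotent $\gamma^{-1}\gamma$ gives $(\gamma)h=(\gamma)h\cdot(\gamma^{-1}\gamma)h=(\gamma)h\cdot e$. Using the first assertion of Lemma~\ref{lemma-2.5} to choose $\varepsilon\in E(\mathscr{C}_{\mathbb{N}}(\pi,\sigma))$ with $\gamma\cdot\varepsilon\in\mathscr{C}_{\mathbb{N}}(\pi,\sigma)$, one concludes
\begin{equation*}
(\gamma)h=(\gamma)h\cdot e=(\gamma)h\cdot(\varepsilon)h=(\gamma\cdot\varepsilon)h\in\big(\mathscr{C}_{\mathbb{N}}(\pi,\sigma)\big)h,
\end{equation*}
which yields the nontrivial inclusion; the reverse one is immediate from $\mathscr{C}_{\mathbb{N}}(\pi,\sigma)\subseteq\mathscr{I}_{\infty}^{\,\Rsh\!\!\!\nearrow}(\mathbb{N})$.

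The main subtlety is the collapsing step for idempotents $\iota$ lying outside the bicyclic subsemigroup: the hypothesis gives no direct handle on $(\iota)h$, and one must exploit both directions of the natural partial order around $\iota$ simultaneously, i.e. that $\iota$ dominates some $\varepsilon_{0}\in E(\mathscr{C}_{\mathbb{N}}(\pi,\sigma))$ \emph{and} that $\iota$ is itself dominated by the bicyclic idempotent $\mathbb{I}$, in order to squeeze $(\iota)h$ onto $e$. Once this idempotent-collapse is secured, the passage from idempotents to arbitrary elements via the factorization $\gamma=\gamma\cdot\gamma^{-1}\gamma$ is routine and uses only Lemma~\ref{lemma-2.5} again.
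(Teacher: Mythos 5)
Your proof is correct and follows essentially the same route as the paper's: first collapse all idempotents of $\mathscr{I}_{\infty}^{\,\Rsh\!\!\!\nearrow}(\mathbb{N})$ onto the single value $e$, then use the factorization $\gamma=\gamma\cdot\gamma^{-1}\gamma$ together with Lemma~\ref{lemma-2.5} to place $(\gamma)h=(\gamma\cdot\varepsilon)h$ inside $\big(\mathscr{C}_{\mathbb{N}}(\pi,\sigma)\big)h$. In fact your squeeze argument, combining $(\iota)h\cdot e=(\iota\cdot\varepsilon_0)h=e$ with $(\iota)h=(\iota\cdot\mathbb{I})h=(\iota)h\cdot e$, supplies a justification the paper glosses over: the paper's proof rests on the assertion $E(\mathscr{I}_{\infty}^{\,\Rsh\!\!\!\nearrow}(\mathbb{N}))=E(\mathscr{C}_{\mathbb{N}}(\pi,\sigma))$, which is not literally true (the bicyclic copy contains only the identities of the tails $\{n,n+1,\ldots\}$, not of all cofinite sets), so your version of the idempotent-collapse step is the more careful one.
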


\begin{proof}
Suppose that $\big(E(\mathscr{C}_{\mathbb{N}}(\pi,\sigma))\big)h
=\{e\}$. Since
$\mathscr{I}_{\infty}^{\,\Rsh\!\!\!\nearrow}(\mathbb{N})$ is an
inverse semigroup and
$E(\mathscr{I}_{\infty}^{\,\Rsh\!\!\!\nearrow}(\mathbb{N}))=
E(\mathscr{C}_{\mathbb{N}}(\pi,\sigma))$ we conclude that $e$ is a
unique idempotent in
$\big(\mathscr{I}_{\infty}^{\,\Rsh\!\!\!\nearrow}(\mathbb{N})\big)h$.
Fix an arbitrary element $\gamma$ of
$\mathscr{I}_{\infty}^{\,\Rsh\!\!\!\nearrow}(\mathbb{N})$. Let
$\varepsilon$ be such as in Lemma~\ref{lemma-2.5}. Then we have
 \begin{equation*}
 (\gamma)h=(\gamma\cdot\gamma^{-1}\cdot\gamma)h=
 (\gamma)h\cdot(\gamma^{-1}\cdot\gamma)h=
 (\gamma)h\cdot(\varepsilon)h=(\gamma\cdot\varepsilon)h\in
 \big(\mathscr{C}_{\mathbb{N}}(\pi,\sigma)\big)h,
\end{equation*}
the assertion of the lemma holds.
\end{proof}

We need the following theorem from \cite{GutikRepovs2010?}:

\begin{theorem}[{\cite[Theorem~2.9]{GutikRepovs2010?}}]\label{Gutik-Repovs-theorem}
Let $S$ be a semigroup and
$h\colon\mathscr{I}_{\infty}^{\!\nearrow}(\mathbb{N})\rightarrow S$
a non-annihilating homomorphism. Then either $h$ is a monomorphism
or $(\mathscr{I}_{\infty}^{\!\nearrow}(\mathbb{N}))h$ is a cyclic
subgroup of $S$.
\end{theorem}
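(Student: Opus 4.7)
My plan is to restrict $h$ to the bicyclic subsemigroup $\mathscr{C}_{\mathbb{N}}(\pi,\sigma)\subseteq\mathscr{I}_{\infty}^{\!\nearrow}(\mathbb{N})$ and invoke Corollary~1.32 of~\cite{CP}, which classifies homomorphisms of the bicyclic semigroup: either $h|_{\mathscr{C}_{\mathbb{N}}(\pi,\sigma)}$ is an isomorphism onto its image, or $(\mathscr{C}_{\mathbb{N}}(\pi,\sigma))h$ is a cyclic group $G$ (possibly trivial). The two branches correspond to the two alternatives in the theorem, and the argument in each branch hinges on the $\nearrow$-analogue of Lemma~\ref{lemma-2.5}: every $\gamma\in\mathscr{I}_{\infty}^{\!\nearrow}(\mathbb{N})$ admits a bicyclic idempotent $\varepsilon$ with $\gamma\cdot\varepsilon,\varepsilon\cdot\gamma\in\mathscr{C}_{\mathbb{N}}(\pi,\sigma)$, and every idempotent of $\mathscr{I}_{\infty}^{\!\nearrow}(\mathbb{N})$ dominates a bicyclic one.

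In the case that $(\mathscr{C}_{\mathbb{N}}(\pi,\sigma))h$ is a cyclic group $G$ with identity $e$, I would mirror the proof of Lemma~\ref{lemma-2.6}. Using that every idempotent lies above a bicyclic one and that any two idempotents are $\mathscr{D}$-equivalent via the $\nearrow$-analogue of Proposition~\ref{proposition-2.1}(v), I would first show $(\iota)h=e$ for every idempotent $\iota\in E(\mathscr{I}_{\infty}^{\!\nearrow}(\mathbb{N}))$. Then for arbitrary $\gamma$ and a bicyclic $\varepsilon$ with $\gamma\cdot\varepsilon\in\mathscr{C}_{\mathbb{N}}(\pi,\sigma)$, the computation
\[
(\gamma)h=(\gamma\cdot\gamma^{-1}\cdot\gamma)h=(\gamma)h\cdot(\gamma^{-1}\cdot\gamma)h=(\gamma)h\cdot e=(\gamma\cdot\varepsilon)h\in G
\]
forces $(\mathscr{I}_{\infty}^{\!\nearrow}(\mathbb{N}))h=G$, so the image is a cyclic group.

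In the case that $h|_{\mathscr{C}_{\mathbb{N}}(\pi,\sigma)}$ is an isomorphism I would show that $h$ is a monomorphism. The structural fact I would exploit is that Green's relation $\mathscr{H}$ is trivial on $\mathscr{I}_{\infty}^{\!\nearrow}(\mathbb{N})$: by the $\nearrow$-analogue of Proposition~\ref{proposition-2.1}(ix), $\alpha\mathscr{H}\beta$ forces $\operatorname{dom}\alpha=\operatorname{dom}\beta$ and $\operatorname{rank}\alpha=\operatorname{rank}\beta$, and the uniqueness of the monotone non-decreasing bijection between two cofinite subsets of $\mathbb{N}$ then gives $\alpha=\beta$. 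Since the maximum idempotent-separating congruence on any inverse semigroup is contained in $\mathscr{H}$, the problem reduces to showing that $h$ separates every pair of distinct idempotents $\varepsilon_A,\varepsilon_B$ (with $A,B\in\mathscr{P}_{<\omega}(\mathbb{N})$), granted that it separates the bicyclic subchain $\{\varepsilon_{\{1,\ldots,n\}}:n\geqslant 0\}$.

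The main obstacle is precisely this idempotent-separation step: the natural $\mathscr{D}$-witnesses $\alpha_A\in\mathscr{I}_{\infty}^{\!\nearrow}(\mathbb{N})$ (satisfying $\alpha_A\alpha_A^{-1}=\varepsilon_A$ and $\alpha_A^{-1}\alpha_A=\varepsilon_{\{1,\ldots,|A|\}}$) lie outside $\mathscr{C}_{\mathbb{N}}(\pi,\sigma)$ whenever $A$ is not an initial segment, so one cannot directly conjugate a hypothetical identification $(\varepsilon_A)h=(\varepsilon_B)h$ into the bicyclic chain. The route I foresee is to first normalise $(A,B)$ via iterative bicyclic conjugation: the readily verified identities $\pi\cdot\varepsilon_A\cdot\sigma=\varepsilon_{(A\setminus\{1\})-1}$ and $\sigma\cdot\varepsilon_A\cdot\pi=\varepsilon_{\{1\}\cup(A+1)}$ propagate the identification along shifted sets, and after $\max(A\triangle B)-1$ steps it takes the form $(\varepsilon_{C\cup\{1\}})h=(\varepsilon_C)h$ with $1\notin C$. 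Conjugating this reduced identification by $\alpha_C$ kills the "hole" at $1$: a direct calculation gives $\alpha_C^{-1}\varepsilon_C\alpha_C=\varepsilon_{\{1,\ldots,|C|\}}$ and $\alpha_C^{-1}\varepsilon_{C\cup\{1\}}\alpha_C=\varepsilon_{\{1,\ldots,|C|+1\}}$, so the identification becomes $(\varepsilon_{\{1,\ldots,|C|\}})h=(\varepsilon_{\{1,\ldots,|C|+1\}})h$, contradicting injectivity on the bicyclic chain and completing the proof.
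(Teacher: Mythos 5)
The paper never proves this statement: it is imported verbatim as \cite[Theorem~2.9]{GutikRepovs2010?}, so there is no in-paper proof to compare against; the closest internal analogue is the chain Lemma~\ref{lemma-2.5} $\to$ Lemma~\ref{lemma-2.6} $\to$ Lemma~\ref{lemma-2.7} $\to$ Theorem~\ref{theorem-2.8} for the larger semigroup $\mathscr{I}_{\infty}^{\,\Rsh\!\!\!\nearrow}(\mathbb{N})$. Your argument is a correct, self-contained proof. The cyclic-group branch is exactly the Lemma~\ref{lemma-2.6} mechanism transplanted to $\mathscr{I}_{\infty}^{\!\nearrow}(\mathbb{N})$; the monomorphism branch is cleaner than the paper's Lemma~\ref{lemma-2.7} precisely because $\mathscr{H}$ is trivial on $\mathscr{I}_{\infty}^{\!\nearrow}(\mathbb{N})$ (uniqueness of the increasing bijection between two cofinite sets), so once $\ker h$ separates idempotents you get $\ker h\subseteq\mu\subseteq\mathscr{H}=\Delta$ and are done, with no excursion into the normal subgroup structure of a unit group as in Lemma~\ref{lemma-2.7}. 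I checked the computational core: $\pi\varepsilon_A\sigma=\varepsilon_{(A\setminus\{1\})-1}$ and $\sigma\varepsilon_A\pi=\varepsilon_{\{1\}\cup(A+1)}$ are right, after $\max(A\triangle B)-1$ downward shifts the identification does take the form $(\varepsilon_{C\cup\{1\}})h=(\varepsilon_C)h$ with $1\notin C$, and conjugating by the unique increasing $\alpha_C$ with $\alpha_C\alpha_C^{-1}=\varepsilon_C$, $\alpha_C^{-1}\alpha_C=\varepsilon_{\{1,\ldots,|C|\}}$ does send $1$ to $|C|+1$ and hence collapses two adjacent bicyclic idempotents, contradicting injectivity on $\mathscr{C}_{\mathbb{N}}(\pi,\sigma)$.

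One wrinkle to fix in the cyclic-group branch: $\mathscr{D}$-equivalence of idempotents does not by itself give $(\iota)h=e$. If $\alpha\alpha^{-1}=\iota$ and $\alpha^{-1}\alpha=\varepsilon$ with $(\varepsilon)h=e$, you only learn that $(\iota)h$ is an idempotent $\mathscr{D}$-related to $e$ in the image, which need not equal $e$. The correct route uses the ingredient you already list: every idempotent $\iota$ satisfies $\varepsilon_0\leqslant\iota\leqslant\mathbb{I}$ for some bicyclic $\varepsilon_0$ (Lemma~\ref{lemma-2.5}-style) and the bicyclic identity $\mathbb{I}$, and homomorphisms preserve the natural order on idempotents, so $e=(\varepsilon_0)h\leqslant(\iota)h\leqslant(\mathbb{I})h=e$ forces $(\iota)h=e$. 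With that substitution the image has a unique idempotent, is a group, and your displayed computation shows it equals $(\mathscr{C}_{\mathbb{N}}(\pi,\sigma))h$, which is cyclic by \cite[Corollary~1.32]{CP}. This is a repair of a step, not a missing idea; the overall proof stands.
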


\begin{lemma}\label{lemma-2.7}
Let $S$ be  a semigroup and $h\colon
\mathscr{I}_{\infty}^{\,\Rsh\!\!\!\nearrow}(\mathbb{N}) \rightarrow
S$ be a homomorphism such that the restriction
$h|_{\mathscr{C}_{\mathbb{N}}(\pi,\sigma)}\colon
\mathscr{C}_{\mathbb{N}}(\pi,\sigma)\rightarrow
\big(\mathscr{C}_{\mathbb{N}}(\pi,\sigma)\big)h\subseteq S$ is an
isomorphism. Then $h$ is an isomorphism.
\end{lemma}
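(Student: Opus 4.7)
The plan is to prove that $h$ is injective; the image $(\mathscr{I}_{\infty}^{\,\Rsh\!\!\!\nearrow}(\mathbb{N}))h$ is automatically a subsemigroup of $S$, making $h$ an isomorphism onto it. The key stepping stone is Theorem~\ref{Gutik-Repovs-theorem} applied to $h|_{\mathscr{I}_{\infty}^{\!\nearrow}(\mathbb{N})}$: it gives a dichotomy between monomorphism and cyclic-group image. The hypothesis that $h|_{\mathscr{C}_{\mathbb{N}}(\pi,\sigma)}$ is an isomorphism forces $(E(\mathscr{C}_{\mathbb{N}}(\pi,\sigma)))h$ to be an infinite chain of distinct idempotents in $S$, ruling out the group alternative. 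Hence $h|_{\mathscr{I}_{\infty}^{\!\nearrow}(\mathbb{N})}$ is a monomorphism, which is the starting point for what follows.

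Now suppose $(\alpha)h = (\beta)h$ in $\mathscr{I}_{\infty}^{\,\Rsh\!\!\!\nearrow}(\mathbb{N})$. Uniqueness of inverses in the inverse semigroup $(\mathscr{I}_{\infty}^{\,\Rsh\!\!\!\nearrow}(\mathbb{N}))h$ gives $(\alpha^{-1})h = (\beta^{-1})h$, hence $(\alpha\alpha^{-1})h = (\beta\beta^{-1})h$ and $(\alpha^{-1}\alpha)h = (\beta^{-1}\beta)h$. By Proposition~\ref{proposition-2.1}$(i)$ all four idempotents live in $\mathscr{I}_{\infty}^{\!\nearrow}(\mathbb{N})$, so injectivity there yields $\operatorname{dom}\alpha = \operatorname{dom}\beta =: D$ and $\operatorname{rank}\alpha = \operatorname{rank}\beta =: R$. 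Letting $\alpha_0\colon D\to R$ be the unique monotone bijection, I factor $\alpha = \alpha_0 s$ and $\beta = \alpha_0 t$ with $s,t$ finitary permutations of $R$, and set $\delta := \beta^{-1}\alpha$. A direct computation gives $(\delta)h = (\operatorname{id}_R)h$ together with the equivalence $\delta = \operatorname{id}_R \Leftrightarrow s = t \Leftrightarrow \alpha = \beta$. Thus the task reduces to showing: if $\delta$ is a finitary permutation of $R$ (as a partial map with domain $R$) and $(\delta)h = (\operatorname{id}_R)h$, then $\delta = \operatorname{id}_R$.

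Assume for contradiction $\delta \neq \operatorname{id}_R$, let $M$ be the largest element of $R$ moved by $\delta$ (a short argument shows $M \geq 2$, since no bijection can have one-element support), and take $k = M-1$. Both products $\pi^k\delta$ and $\pi^k\operatorname{id}_R$ lie in $\mathscr{I}_{\infty}^{\,\Rsh\!\!\!\nearrow}(\mathbb{N})$ with common (cofinite) domain $\{x\in\mathbb{N} : x + k \in R\}$; at $x = 1$ they take values $\delta(M) < M+1$ and $M$ respectively, and at $x\geq 2$ both reduce to $x + k$ (where defined), since $x + k > M$ lies outside the support of $\delta$. This forces both $\pi^k\delta$ and $\pi^k\operatorname{id}_R$ to be strictly monotone elements of $\mathscr{I}_{\infty}^{\!\nearrow}(\mathbb{N})$ that disagree at $x=1$, so the first step gives $h(\pi^k\delta) \neq h(\pi^k\operatorname{id}_R)$. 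But $h(\delta) = h(\operatorname{id}_R)$ would yield $h(\pi^k\delta) = h(\pi^k)h(\delta) = h(\pi^k)h(\operatorname{id}_R) = h(\pi^k\operatorname{id}_R)$, a contradiction. Hence $\delta = \operatorname{id}_R$.

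The main obstacle is the calibration of the multiplier $\pi^k$ in the last step. Taking $k$ too large collapses both $\pi^k\delta$ and $\pi^k\operatorname{id}_R$ to the common shift $\pi^k$ (the shift outruns the support of $\delta$), erasing the witness; taking $k$ too small leaves the non-monotone finite part of $\delta$ inside the product and violates membership in $\mathscr{I}_{\infty}^{\!\nearrow}(\mathbb{N})$. The choice $k = M - 1$ sits precisely on the boundary: it exposes $\delta(M)$ as the initial value of $\pi^k\delta$ while leaving all subsequent values on the monotone tail $M+1, M+2, \ldots$. A brief verification using cofiniteness of $R$ handles the case when some $x + k$ falls in $\mathbb{N}\setminus R$, so that the relevant domains remain cofinite and the value sequences remain strictly monotone.
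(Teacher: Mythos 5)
Your proof is correct, and while its first half runs parallel to the paper's, the decisive step is genuinely different. Like the paper, you first apply Theorem~\ref{Gutik-Repovs-theorem} to force $h|_{\mathscr{I}_{\infty}^{\!\nearrow}(\mathbb{N})}$ to be a monomorphism (the cyclic-group alternative being incompatible with the infinitely many distinct idempotents in $\big(E(\mathscr{C}_{\mathbb{N}}(\pi,\sigma))\big)h$), and then use inverseness to show that $(\alpha)h=(\beta)h$ forces $\alpha$ and $\beta$ into a common $\mathscr{H}$-class, reducing everything to a finitary permutation $\delta$ of a cofinite set $R$ with $(\delta)h=(\operatorname{id}_R)h$. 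At this point the paper transfers to the group of units via Green's theorem and invokes the external group-theoretic fact that $\textsf{A}_\infty(\mathbb{N})$ is the unique proper normal subgroup of $\textsf{S}_\infty(\mathbb{N})$, so that a specific $3$-cycle $\beta$ is identified with the identity; multiplying by the fixed partial map $\varepsilon_{1,2}$ then produces two distinct monotone elements with equal images. You instead handle an arbitrary non-identity $\delta$ head-on: with $M$ the largest point moved by $\delta$ (so $(M)\delta<M$ and $M\geqslant 2$), the calibrated shift $\pi^{M-1}$ pushes all of $\operatorname{dom}(\pi^{M-1}\delta)\setminus\{1\}$ past the support of $\delta$ while exposing $(M)\delta$ as the value at $1$, so that $\pi^{M-1}\delta$ and $\pi^{M-1}\operatorname{id}_R$ are two distinct elements of $\mathscr{I}_{\infty}^{\!\nearrow}(\mathbb{N})$ with the same image --- contradicting the injectivity already established. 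Your checks that these products are monotone with cofinite domain and range, and that $k=M-1$ is exactly the right exponent, are correct. The trade-off: your argument is self-contained and avoids both Green's theorem and the normal-subgroup classification of $\textsf{S}_\infty(\mathbb{N})$ (cited in the paper to Karras--Solitar and Guran), at the cost of the explicit calibration; the paper's argument concentrates all computation in one fixed $3$-cycle but leans on that nontrivial imported fact.
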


\begin{proof}
Suppose to the contrary that the map $h\colon
\mathscr{I}_{\infty}^{\,\Rsh\!\!\!\nearrow}(\mathbb{N})\rightarrow
S$ is not an isomorphism. Then by Theorem~\ref{Gutik-Repovs-theorem}
we have that the restriction
$h|_{\mathscr{I}_{\infty}^{\!\nearrow}(\mathbb{N})}\colon
\mathscr{I}_{\infty}^{\!\nearrow}(\mathbb{N})\rightarrow
(\mathscr{I}_{\infty}^{\!\nearrow}(\mathbb{N}))h\subseteq S$ is an
isomorphism. Since
$\mathscr{I}_{\infty}^{\,\Rsh\!\!\!\nearrow}(\mathbb{N})$ is an
inverse semigroup we conclude that if $(\alpha)h=(\beta)h$ for some
$\alpha,\beta\in\mathscr{I}_{\infty}^{\,\Rsh\!\!\!\nearrow}(\mathbb{N})$
then $\alpha\mathscr{H}\beta$. Otherwise if $\alpha$ and $\beta$ are
not $\mathscr{H}$-equivalent and $(\alpha)h\neq(\beta)h$ then
$(\alpha^{-1})h\neq(\beta^{-1})h$ and therefore either
$(\alpha\alpha^{-1})h\neq(\beta\beta^{-1})h$ or
$(\alpha^{-1}\alpha)h\neq(\beta^{-1}\beta)h$, a contradiction to the
assumption that the restriction
$h|_{\mathscr{I}_{\infty}^{\!\nearrow}(\mathbb{N})}\colon
\mathscr{I}_{\infty}^{\!\nearrow}(\mathbb{N})\rightarrow
(\mathscr{I}_{\infty}^{\!\nearrow}(\mathbb{N}))h\subseteq S$ is an
isomorphism. Thus by Green Theorem (see \cite[Theorem~2.20]{CP})
without loss of generality we can assume that
$(\mathbb{I})h=(\alpha)h$ for some $\alpha\in H(\mathbb{I})$. Since
the group $\textsf{S}_\infty(\mathbb{N})$ has only one proper normal
subgroup and such subgroup is the group
$\textsf{A}_\infty(\mathbb{N})$ of even permutations of $\mathbb{N}$
(see \cite{KarrasSolitar1956} and \cite[pp.~313--314,
Example]{Guran1981}) we conclude that
$(\textsf{A}_\infty(\mathbb{N}))h=(\mathbb{I})h$. We denote
\begin{equation*}
    \beta=
\left(%
\begin{array}{cccccccc}
  1 & 2 & 3 & 4 & 5 & \cdots & n & \cdots \\
  2 & 3 & 1 & 4 & 5 & \cdots & n & \cdots \\
\end{array}%
\right)
 \qquad \mbox{ and } \qquad
    \varepsilon_{1,2}=
\left(%
\begin{array}{cccccc}
  3 & 4 & 5 & \cdots & n & \cdots \\
  1 & 4 & 5 & \cdots & n & \cdots \\
\end{array}%
\right).
\end{equation*}
Then $\beta\in\textsf{A}_\infty(\mathbb{N})$. Therefore we have
that
\begin{equation*}
    (\varepsilon_{1,2})h=(\varepsilon_{1,2}\cdot\mathbb{I})h=
    (\varepsilon_{1,2})h\cdot(\mathbb{I})h=
    (\varepsilon_{1,2})h\cdot(\beta)h=
    (\varepsilon_{1,2}\cdot\beta)h
\end{equation*}
and similarly
$(\varepsilon_{1,2})h=(\beta\cdot\varepsilon_{1,2})h$. Since
\begin{equation*}
    \beta\cdot\varepsilon_{1,2}=
\left(%
\begin{array}{cccccc}
  2 & 4 & 5 & \cdots & n & \cdots \\
  3 & 4 & 5 & \cdots & n & \cdots \\
\end{array}%
\right)
 \qquad \mbox{ and } \qquad
    \varepsilon_{1,2}\cdot\beta=
\left(%
\begin{array}{cccccc}
  3 & 4 & 5 & \cdots & n & \cdots \\
  1 & 4 & 5 & \cdots & n & \cdots \\
\end{array}%
\right)
\end{equation*}
we conclude that
$\beta\cdot\varepsilon_{1,2},\varepsilon_{1,2}\cdot\beta\in
\mathscr{I}_{\infty}^{\!\nearrow}(\mathbb{N})$. Hence by
Theorem~\ref{Gutik-Repovs-theorem} the set
$(\mathscr{I}_{\infty}^{\!\nearrow}(\mathbb{N}))h$ contains only one
idempotent and therefore the assertions of Lemma~\ref{lemma-2.6}
hold. This completes the proof of the lemma.
\end{proof}

Theorem~\ref{Gutik-Repovs-theorem} and Lemmas~\ref{lemma-2.6} and
\ref{lemma-2.7} imply the following theorem:

\begin{theorem}\label{theorem-2.8}
Let $S$ be a semigroup and
$h\colon\mathscr{I}_{\infty}^{\,\Rsh\!\!\!\nearrow}(\mathbb{N})
\rightarrow S$  a non-annihilating homomorphism. Then either $h$
is a monomorphism or
$(\mathscr{I}_{\infty}^{\,\Rsh\!\!\!\nearrow}(\mathbb{N}))h$ is a
cyclic subgroup of $S$.
\end{theorem}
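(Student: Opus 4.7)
The plan is to lift the bicyclic dichotomy from the subsemigroup $\mathscr{C}_{\mathbb{N}}(\pi,\sigma)$ up to the whole semigroup using Lemmas~\ref{lemma-2.6} and~\ref{lemma-2.7}. Recall that by Corollary~1.32 of~\cite{CP}, every non-annihilating homomorphism of the bicyclic semigroup is either an isomorphism or has image a cyclic group; this will be the engine.

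The first step is to verify that the restriction $h|_{\mathscr{C}_{\mathbb{N}}(\pi,\sigma)}$ is itself non-annihilating. Suppose for contradiction that it is annihilating; then $(\mathscr{C}_{\mathbb{N}}(\pi,\sigma))h$ is a singleton $\{e\}$, so in particular $(E(\mathscr{C}_{\mathbb{N}}(\pi,\sigma)))h=\{e\}$. Since $h$ is assumed non-annihilating, Lemma~\ref{lemma-2.6} applies and yields $(\mathscr{I}_{\infty}^{\,\Rsh\!\!\!\nearrow}(\mathbb{N}))h=(\mathscr{C}_{\mathbb{N}}(\pi,\sigma))h=\{e\}$, contradicting the non-annihilating-ness of $h$. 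So the restriction is necessarily non-annihilating and Corollary~1.32 of~\cite{CP} applies to it.

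This splits the argument into two cases. If $h|_{\mathscr{C}_{\mathbb{N}}(\pi,\sigma)}$ is an isomorphism onto its image, Lemma~\ref{lemma-2.7} applies verbatim and forces $h$ to be a monomorphism on all of $\mathscr{I}_{\infty}^{\,\Rsh\!\!\!\nearrow}(\mathbb{N})$, giving the first alternative of the theorem. Otherwise $(\mathscr{C}_{\mathbb{N}}(\pi,\sigma))h$ is a cyclic group, which contains a unique idempotent, so $(E(\mathscr{C}_{\mathbb{N}}(\pi,\sigma)))h$ is a singleton; invoking Lemma~\ref{lemma-2.6} once more gives $(\mathscr{I}_{\infty}^{\,\Rsh\!\!\!\nearrow}(\mathbb{N}))h=(\mathscr{C}_{\mathbb{N}}(\pi,\sigma))h$, a cyclic subgroup of $S$, which is the second alternative.

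Since the substantive content has already been isolated in the preceding lemmas (and especially in Theorem~\ref{Gutik-Repovs-theorem}, which feeds into Lemma~\ref{lemma-2.7}), I do not expect any real obstacle in the theorem itself. The only subtlety worth flagging is the opening book-keeping step that non-annihilating-ness of $h$ descends to its restriction to $\mathscr{C}_{\mathbb{N}}(\pi,\sigma)$, so that the classical bicyclic dichotomy becomes applicable; this is handled cleanly by applying Lemma~\ref{lemma-2.6} in the contrapositive direction, as above.
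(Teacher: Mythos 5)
Your proof is correct and follows essentially the same route as the paper, which simply states that the theorem is implied by Theorem~\ref{Gutik-Repovs-theorem} and Lemmas~\ref{lemma-2.6} and~\ref{lemma-2.7}; you have merely filled in the omitted book-keeping (non-annihilating-ness descending to the bicyclic subsemigroup, and the case split feeding into Lemma~\ref{lemma-2.7} versus Lemma~\ref{lemma-2.6}), correctly.
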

%%%%%%%%%%%%%%%%%%%%%%%%%%%%%%%%%%%%%%%%%%%%%%%%%%%%%%%%%%%%%%%%%%

\section{Topologizations of some classes of countable semigroups}

\begin{definition}\label{definitio-3.1}
We shall say that a semigroup $S$ has:
\begin{itemize}
    \item an $\textsf{S}$-\emph{property} if for every $a,b\in S$
     there exist $c,d\in S^1$ such that $c\cdot a\cdot d=b$;

    \item an $\textsf{F}$-\emph{property} if for every $a,b,c,d\in S^1$ the
     sets $\{x\in S\mid a\cdot x=b\}$ and $\{x\in S\mid x\cdot
     c=d\}$ are finite or empty;

    \item an $\textsf{FS}$-\emph{property} if $S$ has
     $\textsf{F}$- and $\textsf{S}$-properties.
\end{itemize}
\end{definition}

\begin{remark}\label{remark-3.2}
We observe that
\begin{itemize}
    \item[1)] every simple (resp., left simple, right simple)
     semigroup has $\textsf{S}$-\emph{property};

    \item[2)] every free (Abelian) semigroup has
     $\textsf{F}$-\emph{property};

    \item[3)]
     $\mathscr{I}_{\infty}^{\,\Rsh\!\!\!\nearrow}(\mathbb{N})$,
     $\mathscr{I}_{\infty}^{\!\nearrow}(\mathbb{N})$ and
     the bicyclic semigroup have $\textsf{FS}$-property.
\end{itemize}
\end{remark}

\begin{lemma}\label{lemma-3.3}
Let $S$ be a Hausdorff semitopological semigroup with
$\textsf{FS}$-property. If $S$ has an isolated point then $S$ is
the discrete topological space.
\end{lemma}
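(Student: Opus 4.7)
The plan is to prove that every point of $S$ is isolated by exhibiting, for each $y\in S$, a finite open neighbourhood of $y$ and then using the Hausdorff assumption to shrink it to $\{y\}$. Let $x_0\in S$ be the given isolated point, so $\{x_0\}$ is open. Fix any $y\in S$; I aim to show $\{y\}$ is open.

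First I would invoke the $\textsf{S}$-property with $a=y$ and $b=x_0$ to obtain elements $c,d\in S^1$ with $c\cdot y\cdot d=x_0$. Since $S$ is semitopological, the left translation $\lambda_c$ and the right translation $\rho_d$ are continuous self-maps of $S$ (for $c$ or $d$ equal to $1$, they are the identity map, so continuity is automatic). Their composition
\[
f\colon S\to S,\qquad f(x)=c\cdot x\cdot d,
\]
is therefore continuous, and $y\in f^{-1}(\{x_0\})$. Because $\{x_0\}$ is open, the set $U:=f^{-1}(\{x_0\})$ is an open neighbourhood of $y$.

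Next I would use the $\textsf{F}$-property to show that $U$ is \emph{finite}. Indeed, $U=\{x\in S\mid (c\cdot x)\cdot d=x_0\}$, so first apply the $\textsf{F}$-property to the equation $w\cdot d=x_0$ (with $d\in S^{1}$, $x_0\in S^{1}$) to conclude that $T:=\{w\in S\mid w\cdot d=x_0\}$ is finite, say $T=\{w_1,\dots,w_n\}$. Then
\[
U=\bigcup_{i=1}^{n}\{x\in S\mid c\cdot x=w_i\},
\]
and each set on the right is again finite by the $\textsf{F}$-property, so $U$ is a finite union of finite sets, hence finite.

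Finally, since $S$ is Hausdorff, every singleton in $S$ is closed, so $U\setminus\{y\}$, being a finite union of closed points, is closed in $S$. Therefore
\[
\{y\}=U\cap\bigl(S\setminus(U\setminus\{y\})\bigr)
\]
is open, and $y$ is an isolated point. Since $y$ was arbitrary, $S$ is discrete. The main conceptual step is the observation that the $\textsf{S}$-property lets us pull the isolation of $x_0$ back to any other point via a continuous map; the only mildly technical obstacle is the need to decompose $U$ into two layers in order to apply the $\textsf{F}$-property (which only bounds the solution sets of one-sided equations), but this is straightforward.
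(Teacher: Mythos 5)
Your proof is correct and follows essentially the same route as the paper's: use the $\textsf{S}$-property to map an arbitrary point $y$ onto the isolated point via a continuous double translation, pull back the open singleton to get an open neighbourhood of $y$, bound it by the $\textsf{F}$-property, and finish with Hausdorffness. Your two-layer decomposition of the solution set of $c\cdot x\cdot d=x_0$ is a detail the paper leaves implicit, and you fill it in correctly.
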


\begin{proof}
Let $t$ be an isolated point in $S$. Since the semigroup $S$ has the
$\textsf{FS}$-\emph{property} we conclude that for every $s\in S$
there exist $a,b\in S^1$ such that $a\cdot s\cdot b=t$ and the
equation $a\cdot x\cdot b=t$ has a finite set of solutions.
Therefore the continuity of translations in $(S,\tau)$ implies that
the element $s$ has a finite open neighbourhood, and hence
Hausdorffness of $(S,\tau)$ implies that $s$ is an isolated point of
$(S,\tau)$. This completes the proof of the lemma.
\end{proof}

A topological space $X$ is called \emph{Baire} if for each sequence
$A_1, A_2,\ldots, A_i,\ldots$ of nowhere dense subsets of $X$ the
union $\displaystyle\bigcup_{i=1}^\infty A_i$ is a co-dense subset
of $X$~\cite{Engelking1989}.

\begin{theorem}\label{theorem-3.4}
Let $S$ be a countable semigroup with $\textsf{FS}$-property. Then
every Baire topology $\tau$ on $S$ such that $(S,\tau)$ is a
Hausdorff semitopological semigroup is discrete.
\end{theorem}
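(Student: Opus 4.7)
The plan is to combine the Baire category hypothesis with the countability of $S$ to force the existence of an isolated point, and then invoke Lemma~\ref{lemma-3.3} to propagate discreteness from this single isolated point to all of $S$.

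First I would enumerate $S=\{s_1,s_2,s_3,\ldots\}$ using countability, and consider the decomposition $S=\bigcup_{i\in\mathbb{N}}\{s_i\}$. Observe that since $(S,\tau)$ is Hausdorff, each singleton $\{s_i\}$ is closed, so $\overline{\{s_i\}}=\{s_i\}$. Thus $\{s_i\}$ is nowhere dense in $(S,\tau)$ if and only if it has empty interior, i.e., if and only if $s_i$ is not an isolated point of $(S,\tau)$.

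Now I would argue by contradiction: suppose that $(S,\tau)$ has no isolated points. Then, by the above observation, every singleton $\{s_i\}$ is nowhere dense in $(S,\tau)$. Hence $S$ is a countable union of nowhere dense subsets of itself, so the Baire property of $(S,\tau)$ forces $S=\bigcup_{i\in\mathbb{N}}\{s_i\}$ to be a co-dense subset of $S$, i.e.\ its complement $\varnothing$ is dense in $S$. Since $(S,\tau)$ is non-empty, this is a contradiction. Therefore $(S,\tau)$ must contain at least one isolated point.

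Finally I would apply Lemma~\ref{lemma-3.3} directly: since $S$ is a Hausdorff semitopological semigroup with the $\textsf{FS}$-property which contains an isolated point, Lemma~\ref{lemma-3.3} yields that $(S,\tau)$ is the discrete topological space, completing the proof. The only genuinely delicate point is recognizing that the Baire hypothesis, as formulated in the text ("the union is co-dense"), already rules out writing the whole space as a countable union of nowhere dense sets; once this is clarified, the argument reduces cleanly to a single application of Lemma~\ref{lemma-3.3}, and no further work on the $\textsf{FS}$-property is needed here since it has already been absorbed in that lemma.
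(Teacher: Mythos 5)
Your argument is exactly the paper's proof, only written out in more detail: the paper also covers the countable Baire space $S$ by its singletons to extract an isolated point and then invokes Lemma~\ref{lemma-3.3} to conclude discreteness. The proposal is correct and takes essentially the same approach.
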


\begin{proof}
We consider countable cover $\Gamma=\{s\mid s\in S\}$ of the Baire
space $(S,\tau)$. Then there exists an isolated point $t$ in $S$.
By Lemma~\ref{lemma-3.3} the topological space is discrete.
\end{proof}

A Tychonoff space $X$ is called \emph{\v{C}ech complete} if for
every compactification $cX$ of $X$ the remainder $cX\setminus c(X)$
is an $F_\sigma$-set in $cX$~\cite{Engelking1989}.

Since every \v{C}ech complete space (and hence every locally
compact space) is Baire Theorem~\ref{theorem-3.4} implies the
following:

\begin{corollary}\label{corollary-3.5}
Every Hausdorff \v{C}ech complete (locally compact) countable
semitopological semigroup with $\textsf{FS}$-property is discrete.
\end{corollary}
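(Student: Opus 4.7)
The plan is to derive Corollary~\ref{corollary-3.5} as an immediate consequence of Theorem~\ref{theorem-3.4}, the only substantive input being the classical fact that \v{C}ech completeness forces the Baire property. So there is essentially no obstacle here; the work is just in stringing together the right implications.

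First I would recall that every \v{C}ech complete Tychonoff space is a Baire space. This is standard (see, e.g., Theorem~3.9.3 in Engelking~\cite{Engelking1989}), and the argument is already part of the general machinery used earlier: in any compactification $cX$, the remainder is an $F_\sigma$, so $X$ is a $G_\delta$ in $cX$, and an application of the Baire category theorem in the compact Hausdorff space $cX$ (together with the standard trick of intersecting dense open sets with $X$) yields the Baire property for $X$. Secondly, I would note that every locally compact Hausdorff space is \v{C}ech complete (indeed, it is open in its one-point compactification, hence a $G_\delta$ there), so the locally compact case is subsumed.

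With these two observations in hand, let $(S,\tau)$ be a countable Hausdorff \v{C}ech complete semitopological semigroup with $\textsf{FS}$-property. By the preceding paragraph, $(S,\tau)$ is Baire. Theorem~\ref{theorem-3.4} then applies verbatim and yields that $\tau$ is the discrete topology. The locally compact case follows from the same argument, via the implication locally compact $\Rightarrow$ \v{C}ech complete $\Rightarrow$ Baire.

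The main (and only) point worth stressing is that no new semigroup-theoretic input is required beyond Theorem~\ref{theorem-3.4}; the corollary is purely a topological strengthening obtained by replacing the hypothesis ``Baire'' with the more familiar hypothesis ``\v{C}ech complete'' or ``locally compact.'' Accordingly, I would keep the write-up to essentially one or two sentences, citing \cite{Engelking1989} for the Baireness of \v{C}ech complete spaces and invoking Theorem~\ref{theorem-3.4}.
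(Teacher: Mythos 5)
Your proposal is correct and follows exactly the route the paper takes: the paper derives Corollary~\ref{corollary-3.5} from Theorem~\ref{theorem-3.4} by noting that every \v{C}ech complete (hence every locally compact) Hausdorff space is Baire. No further comment is needed.
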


A topological space $X$ is called \emph{hereditary Baire} if every
closed subset of $X$ is a Baire space~\cite{Engelking1989}. Every
\v{C}ech complete (and hence locally compact) space is hereditary
Baire (see \cite[Theorem~3.9.6]{Engelking1989}). We shall say that a
Hausdorff semitopological semigroup $S$ is an $I$-\emph{Baire space}
if either $sS$ or $Ss$ is a Baire space for every $s\in S$.

\begin{remark}\label{remark-3.6}
We observe that every left ideal $Ss$ and every right ideal $sS$ of
a regular semigroup $S$ are generated by some idempotents of $S$.
Therefore every principal left or right ideal of a regular Hausdorff
semitopological semigroup $S$ is a closed subset of $S$. Hence every
regular Hausdorff hereditary Baire semitopological semigroup is the
$I$-Baire space.
\end{remark}

\begin{theorem}\label{theorem-3.7}
Let $S$ be a countable semilattice with $\textsf{F}$-property.
Then every $I$-Baire topology $\tau$ on $S$ such that $(S,\tau)$
is a Hausdorff semitopological semilattice is discrete.
\end{theorem}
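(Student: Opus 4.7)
The plan is to show $S$ is discrete by establishing two facts: (a) for every $s\in S$ the principal ideal $sS$ contains a point that is already isolated in all of $S$, and (b) the set $I(S)$ of isolated points of $S$ is upward-closed in the natural order. Since the semilattice $S$ is commutative, $sS=Ss$, so the $I$-Baire hypothesis just says each $sS$ is Baire; and any $t\in sS$ satisfies $t\leqslant s$, so combining (a) and (b) forces every $s\in S$ to lie in $I(S)$.

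For (a), fix $s\in S$. The subspace $sS$ is Baire, countable, and Hausdorff. Such a nonempty space must contain an isolated point: otherwise every singleton is nowhere dense and the whole space is a nonempty meager Baire space, contradicting the definition recalled before Theorem~\ref{theorem-3.4}. Pick $t$ isolated in $sS$ and an open $U\subseteq S$ with $U\cap sS=\{t\}$. Continuity of left translation $L_s$ together with the inclusion $L_s(S)\subseteq sS$ yields
\[
L_s^{-1}(U)=\{x\in S\mid sx\in U\cap sS\}=\{x\in S\mid sx=t\},
\]
which is open by continuity and finite by the $\textsf{F}$-property. Any finite open subset of a Hausdorff space is discrete in the ambient topology, so each of its points is isolated in $S$; since $st=t$ (because $t\in sS$), the point $t$ itself lies in this set, so $t\in I(S)$.

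For (b), suppose $t\in I(S)$, so $\{t\}$ is open in $S$. The continuity of right translation $R_t$ makes $R_t^{-1}(\{t\})=\{x\in S\mid xt=t\}$ open; the $\textsf{F}$-property forces it to be finite; and in a semilattice this set is exactly ${\uparrow}t$. The Hausdorff/finite-open argument then shows every $x\geqslant t$ is isolated in $S$, so $I(S)$ is upward-closed. Applying (a) to an arbitrary $s\in S$ produces some $t\in I(S)$ with $t\leqslant s$, and (b) then yields $s\in I(S)$; hence $(S,\tau)$ is discrete. I expect the delicate step to be bridge (a): the subspace topology on $sS$ a priori has more open sets than are witnessed by a single open set of $S$, and the $\textsf{F}$-property is what converts an $sS$-isolated point back into a genuine isolated point of $S$ by bounding the fibres of $L_s$.
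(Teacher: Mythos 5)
Your proof is correct and rests on exactly the same ingredients as the paper's: the $I$-Baire hypothesis (plus countability and Hausdorffness) yields a point $t$ isolated in the subspace $sS$, and separate continuity of translations combined with the $\textsf{F}$-property turns the relevant preimages into finite open sets, which in a Hausdorff space consist of isolated points. The only difference is the order of propagation --- the paper first upgrades $t$'s isolation to $s$'s isolation inside $sS$ via the finite open set ${\uparrow}_{sS}t$ and then transfers to $S$ by translating by $s$, whereas you first transfer $t$'s isolation to all of $S$ via $L_s^{-1}(U)$ and then climb the finite open set ${\uparrow}t$ to reach $s$; both routes are sound and essentially equivalent.
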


\begin{proof}
Let $s$ be an arbitrary element of the semilattice $S$. We consider
a countable cover $\Gamma=\{e\mid e\in sS\}$ of $sS$. Since
$(S,\tau)$ is an $I$-Baire space we conclude that there exists an
isolated point $t$ in $sS$. Since $S$ is a semilattice we have that
$s\cdot t=t$. Then ${\uparrow}_{sS}t=\{x\in sS\mid x\cdot t=t\}$ is
a finite subset of $S$ which contains $s$ and by
Proposition~VI-1.13~\cite{GHKLMS} we get that ${\uparrow}_{sS}t$ is
an open subset of $sS$. Hence there exists an open neighbourhood
$U(s)$ of $s$ in $S$ such that $U(s)\cap sS=\{s\}$. The continuity
of translations in $S$ implies that there exists an open
neighbourhood $V(s)\subseteq U(s)$ such that $V(s)\subseteq\{x\in
S\mid x\cdot s=s\}$. Since the semilattice $S$ is Hausdorff and has
$\textsf{F}$-property we have that $s$ is an isolated point of $S$.
\end{proof}

Theorem~\ref{theorem-3.7} implies the following:

\begin{corollary}\label{definitio-3.8}
Every $I$-Baire topology $\tau$ on the countable free semilattice
$FSL_\omega$ such that $(FSL_\omega,\tau)$ is a Hausdorff
semitopological semilattice is discrete.
\end{corollary}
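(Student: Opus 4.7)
The plan is to deduce this corollary as a direct specialization of Theorem~\ref{theorem-3.7}. The countable free semilattice $FSL_\omega$ is, by definition, countable, so the only nontrivial task is to verify that $FSL_\omega$ enjoys the $\textsf{F}$-property of Definition~\ref{definitio-3.1}; once that is done, Theorem~\ref{theorem-3.7} does all the work.

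To check the $\textsf{F}$-property, I would invoke the standard concrete description of $FSL_\omega$ as (isomorphic to) the semilattice of finite subsets of a countable set under union, in the spirit of the identification $(\mathscr{P}_{<\omega}(\mathbb{N}),\subseteq)$ used earlier in the paper. Given $a,b\in FSL_\omega$, any solution $x$ of $a\cdot x=b$ (i.e. $a\cup x=b$) must satisfy $b\setminus a\subseteq x\subseteq b$, so the solution set is a subset of the power set of the finite set $b$ and hence contains at most $2^{|b|}$ elements; in particular it is finite or empty. The same bound works for equations of the form $x\cdot c=d$ by commutativity of the semilattice. Thus both families of sets appearing in Definition~\ref{definitio-3.1} are finite, and the $\textsf{F}$-property is confirmed.

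Having both hypotheses of Theorem~\ref{theorem-3.7} in place, an application of that theorem to $S=FSL_\omega$ yields at once that every Hausdorff $I$-Baire semitopological semilattice topology on $FSL_\omega$ is discrete. There is no genuine obstacle in this argument: the corollary simply records that the countable free semilattice is the most natural concrete example of a countable semilattice satisfying the $\textsf{F}$-property, and the rigidity of its topologizations is inherited wholesale from Theorem~\ref{theorem-3.7}.
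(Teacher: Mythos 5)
Your proposal is correct and follows exactly the paper's route: the paper derives this corollary as an immediate consequence of Theorem~\ref{theorem-3.7}, leaving the hypotheses unverified. Your explicit check that $FSL_\omega$ (realized as the finite subsets of a countable set under union) is countable and has the $\textsf{F}$-property, via the containment $b\setminus a\subseteq x\subseteq b$ bounding the solution set of $a\cdot x=b$ by $2^{|b|}$, is precisely the detail the paper omits, and it is sound.
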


%%%%%%%%%%%%%%%%%%%%%%%%%%%%%%%%%%%%%%%%%%%%%%%%%%%%%%%%%%%%%%%%%%

\section{On topologizations and closures of the semigroup
$\mathscr{I}_{\infty}^{\,\Rsh\!\!\!\nearrow}(\mathbb{N})$}

Theorem~\ref{theorem-3.4} implies the following two corollaries:

\begin{corollary}\label{corollary-4.1}
Every Baire topology $\tau$ on
$\mathscr{I}_{\infty}^{\,\Rsh\!\!\!\nearrow}(\mathbb{N})$ such
that
$(\mathscr{I}_{\infty}^{\,\Rsh\!\!\!\nearrow}(\mathbb{N}),\tau)$
is a Hausdorff semitopological semigroup is discrete.
\end{corollary}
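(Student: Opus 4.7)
The plan is to derive this as a direct application of Theorem~\ref{theorem-3.4}, so the work reduces to verifying its two hypotheses for the semigroup $\mathscr{I}_{\infty}^{\,\Rsh\!\!\!\nearrow}(\mathbb{N})$: countability, and the $\textsf{FS}$-property.

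First I would observe that $\mathscr{I}_{\infty}^{\,\Rsh\!\!\!\nearrow}(\mathbb{N})$ is countable. Indeed, an element $\alpha$ is completely determined by the finite sets $\mathbb{N}\setminus\operatorname{dom}\alpha$ and $\mathbb{N}\setminus\operatorname{rank}\alpha$ together with the finite ``permutation'' datum describing how $\alpha$ fails to be monotone on the initial segment (cf.\ condition $(iii)$ and Proposition~\ref{proposition-2.4}). Since $\mathscr{P}_{<\omega}(\mathbb{N})$ and $\textsf{S}_\infty(\mathbb{N})$ are both countable, so is $\mathscr{I}_{\infty}^{\,\Rsh\!\!\!\nearrow}(\mathbb{N})$.

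Next I would invoke Remark~\ref{remark-3.2}(3), which asserts that $\mathscr{I}_{\infty}^{\,\Rsh\!\!\!\nearrow}(\mathbb{N})$ has the $\textsf{FS}$-property. For completeness, one can note that the $\textsf{S}$-part is precisely the simplicity established in Proposition~\ref{proposition-2.1}$(vi)$, while the $\textsf{F}$-part is exactly Proposition~\ref{proposition-2.2}, which says that the sets $\{\chi\mid \alpha\chi=\beta\}$ and $\{\chi\mid\chi\alpha=\beta\}$ are finite for every $\alpha,\beta\in\mathscr{I}_{\infty}^{\,\Rsh\!\!\!\nearrow}(\mathbb{N})$.

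With both hypotheses in hand, Theorem~\ref{theorem-3.4} applies verbatim to $S=\mathscr{I}_{\infty}^{\,\Rsh\!\!\!\nearrow}(\mathbb{N})$: any Baire topology $\tau$ making it a Hausdorff semitopological semigroup must be discrete. There is essentially no obstacle here since the heavy lifting has already been carried out in Lemma~\ref{lemma-3.3} and Theorem~\ref{theorem-3.4}; the only thing one must be careful about is quoting the correct flavour of $\textsf{FS}$ from Definition~\ref{definitio-3.1} and matching it against Propositions~\ref{proposition-2.1}$(vi)$ and~\ref{proposition-2.2}.
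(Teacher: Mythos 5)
Your proposal is correct and matches the paper's route exactly: the paper derives Corollary~\ref{corollary-4.1} as an immediate consequence of Theorem~\ref{theorem-3.4}, with the $\textsf{FS}$-property supplied by Remark~\ref{remark-3.2}(3) (resting on Propositions~\ref{proposition-2.1}$(vi)$ and~\ref{proposition-2.2}) and countability being clear. Your extra care in checking the hypotheses explicitly is a welcome expansion of what the paper leaves implicit, but it is not a different argument.
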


\begin{corollary}\label{corollary-4.2}
Every Baire topology $\tau$ on
$\mathscr{I}_{\infty}^{\!\nearrow}(\mathbb{N})$ such that
$(\mathscr{I}_{\infty}^{\!\nearrow}(\mathbb{N}),\tau)$ is a
Hausdorff semitopological semigroup is discrete.
\end{corollary}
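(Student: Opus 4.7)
The plan is to reduce Corollary~\ref{corollary-4.2} directly to Theorem~\ref{theorem-3.4}, so the entire task is to check that the hypotheses of that theorem are satisfied by $\mathscr{I}_{\infty}^{\!\nearrow}(\mathbb{N})$: countability, Hausdorffness (given), and the $\textsf{FS}$-property.

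First I would verify countability. Any $\varphi\in\mathscr{I}_{\infty}^{\!\nearrow}(\mathbb{N})$ is uniquely determined by the finite sets $A=\mathbb{N}\setminus\operatorname{dom}\varphi$ and $B=\mathbb{N}\setminus\operatorname{rank}\varphi$, because condition $(ii)$ forces $\varphi$ to be the unique order-preserving bijection from $\mathbb{N}\setminus A$ onto $\mathbb{N}\setminus B$. Since there are only countably many pairs of finite subsets of $\mathbb{N}$, the semigroup is countable.

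Next I would confirm the $\textsf{FS}$-property, which is already asserted in Remark~\ref{remark-3.2}(3). The $\textsf{S}$-property is the bisimplicity established for $\mathscr{I}_{\infty}^{\!\nearrow}(\mathbb{N})$ in \cite{GutikRepovs2010?} (it follows by the same explicit construction used in Proposition~\ref{proposition-2.1}$(vi)$, restricted to the monotone case). The $\textsf{F}$-property is proved exactly as in Proposition~\ref{proposition-2.2}: given $\alpha,\beta\in\mathscr{I}_{\infty}^{\!\nearrow}(\mathbb{N})$, every solution $\chi$ of $\alpha\cdot\chi=\beta$ must agree with the fixed partial map $\alpha^{-1}\cdot\beta$ on the cofinite set $\operatorname{dom}(\alpha^{-1}\cdot\alpha)$, and since elements of $\mathscr{I}_{\infty}^{\!\nearrow}(\mathbb{N})$ are monotone and co-finite, only finitely many such extensions exist; the argument for left translations is symmetric.

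With both conditions in hand, Theorem~\ref{theorem-3.4} applies and yields discreteness of $\tau$. I do not expect a genuine obstacle: all nontrivial content has been packed into Theorem~\ref{theorem-3.4}, and the only verification specific to $\mathscr{I}_{\infty}^{\!\nearrow}(\mathbb{N})$ consists of the two short facts above, both of which are either immediate from the definitions or already recorded in Remark~\ref{remark-3.2}.
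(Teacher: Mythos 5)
Your proposal is correct and follows exactly the paper's route: the paper derives Corollary~\ref{corollary-4.2} as an immediate consequence of Theorem~\ref{theorem-3.4}, with the $\textsf{FS}$-property already recorded in Remark~\ref{remark-3.2}(3). You merely make explicit the countability and $\textsf{FS}$-property verifications that the paper leaves implicit, and both checks are sound.
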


We observe that Corollary~\ref{corollary-4.2} generalizes
Theorem~3.3 from \cite{GutikRepovs2010?}.

The following example shows that there exists a non-discrete
topology $\tau_F$ on the semigroup
$\mathscr{I}_{\infty}^{\,\Rsh\!\!\!\nearrow}(\mathbb{N})$ such
that
$(\mathscr{I}_{\infty}^{\,\Rsh\!\!\!\nearrow}(\mathbb{N}),\tau_F)$
is a Tychonoff topological inverse semigroup.

\begin{example}\label{example-4.2a}
We define a topology $\tau_F$ on the semigroup
$\mathscr{I}_{\infty}^{\,\Rsh\!\!\!\nearrow}(\mathbb{N})$ as
follows. For every
$\alpha\in\mathscr{I}_{\infty}^{\,\Rsh\!\!\!\nearrow}(\mathbb{N})$
we define a family
\begin{equation*}
    \mathscr{B}_F(\alpha)=\{U_\alpha(F)\mid F \mbox{ is a finite
    subset of } \operatorname{dom}\alpha\},
\end{equation*}
where
\begin{equation*}
    U_\alpha(F)=\{\beta\in\mathscr{I}_{\infty}^{\,\Rsh\!\!\!\nearrow}(\mathbb{N})
    \mid \operatorname{dom}\alpha=\operatorname{dom}\beta,
    \operatorname{ran}\alpha=\operatorname{ran}\beta \mbox{ and }
    (x)\beta=(x)\alpha \mbox{ for all } x\in F\}.
\end{equation*}
Since conditions (BP1)--(BP3)~\cite{Engelking1989} hold for the
family $\{\mathscr{B}_F(\alpha)\}_{\alpha\in
\mathscr{I}_{\infty}^{\,\Rsh\!\!\!\nearrow}(\mathbb{N})}$ we
conclude that the family $\{\mathscr{B}_F(\alpha)\}_{\alpha\in
\mathscr{I}_{\infty}^{\,\Rsh\!\!\!\nearrow}(\mathbb{N})}$ is the
base of the topology $\tau_F$ on the semigroup
$\mathscr{I}_{\infty}^{\,\Rsh\!\!\!\nearrow}(\mathbb{N})$.
\end{example}

\begin{proposition}\label{proposition-4.2b}
$(\mathscr{I}_{\infty}^{\,\Rsh\!\!\!\nearrow}(\mathbb{N}),\tau_F)$
is a Tychonoff topological inverse semigroup.
\end{proposition}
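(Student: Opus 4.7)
The plan has three ingredients: Tychonoff separation (really, zero-dimensionality plus Hausdorffness), joint continuity of multiplication, and continuity of inversion. For the first, I would argue that every basic set $U_\alpha(F)$ is clopen: openness is by definition, and for closedness, if $\gamma\notin U_\alpha(F)$ then either $\operatorname{dom}\gamma\ne\operatorname{dom}\alpha$, or $\operatorname{ran}\gamma\ne\operatorname{ran}\alpha$, or there is $x\in F$ with $(x)\gamma\ne(x)\alpha$; in the first two cases $U_\gamma(\emptyset)$ is a neighborhood of $\gamma$ missing $U_\alpha(F)$, and in the last $U_\gamma(\{x\})$ works. The same case split separates any two distinct $\alpha,\beta$, so $\tau_F$ is Hausdorff, and a Hausdorff space with a clopen base is Tychonoff.

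The crux is joint continuity of multiplication. Given $\alpha\cdot\beta=\gamma$ and a basic neighborhood $U_\gamma(F)$ of $\gamma$, I propose the finite sets
\[
A=F\cup\{i\in\operatorname{dom}\alpha:(i)\alpha\in\mathbb{N}\setminus\operatorname{dom}\beta\},\qquad B=\{(x)\alpha:x\in F\}\cup\big(\operatorname{dom}\beta\setminus\operatorname{ran}\alpha\big),
\]
and claim $U_\alpha(A)\cdot U_\beta(B)\subseteq U_\gamma(F)$. For $\alpha'\in U_\alpha(A)$ and $\beta'\in U_\beta(B)$, the equalities $\operatorname{dom}\alpha'=\operatorname{dom}\alpha$, $\operatorname{ran}\alpha'=\operatorname{ran}\alpha$, $\operatorname{dom}\beta'=\operatorname{dom}\beta$, $\operatorname{ran}\beta'=\operatorname{ran}\beta$ come from membership in $U_\alpha(\emptyset)$ and $U_\beta(\emptyset)$. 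The preimages $\alpha^{-1}(\mathbb{N}\setminus\operatorname{dom}\beta)$ and $(\alpha')^{-1}(\mathbb{N}\setminus\operatorname{dom}\beta)$ are then finite subsets of $\operatorname{dom}\alpha$ of the same cardinality $|(\mathbb{N}\setminus\operatorname{dom}\beta)\cap\operatorname{ran}\alpha|$; since the former is contained in $A$ and $\alpha,\alpha'$ agree on $A$, the two preimages coincide, whence $\operatorname{dom}(\alpha'\beta')=\operatorname{dom}(\alpha\beta)$. Dually, the inclusion $B\supseteq\operatorname{dom}\beta\setminus\operatorname{ran}\alpha$ forces $\beta$ and $\beta'$ to agree on the finite set $\operatorname{dom}\beta\setminus\operatorname{ran}\alpha$ and yields $\operatorname{ran}(\alpha'\beta')=\operatorname{ran}(\alpha\beta)$. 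Finally, for $x\in F\subseteq A$ one has $(x)\alpha'=(x)\alpha$, and $(x)\alpha\in(F)\alpha\subseteq B$ gives $((x)\alpha)\beta'=((x)\alpha)\beta$, so $(x)(\alpha'\beta')=(x)\gamma$, proving $\alpha'\beta'\in U_\gamma(F)$.

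Continuity of inversion is shorter: given $U_{\alpha^{-1}}(F)$ with finite $F\subseteq\operatorname{dom}\alpha^{-1}=\operatorname{ran}\alpha$, I would take $A=\{(y)\alpha^{-1}:y\in F\}\subseteq\operatorname{dom}\alpha$. Any $\alpha'\in U_\alpha(A)$ then satisfies $\operatorname{dom}(\alpha')^{-1}=\operatorname{ran}\alpha'=\operatorname{ran}\alpha=\operatorname{dom}\alpha^{-1}$ and dually for ranges; moreover, for $y\in F$ the element $x=(y)\alpha^{-1}$ lies in $A$, so $(x)\alpha'=(x)\alpha=y$, which forces $(y)(\alpha')^{-1}=x=(y)\alpha^{-1}$, hence $(\alpha')^{-1}\in U_{\alpha^{-1}}(F)$.

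The principal difficulty lies in the multiplication step: a neighborhood $U_\alpha(A)$ pins down $\alpha'$ only on the finite set $A$, while $\alpha'$ may differ from $\alpha$ on an arbitrarily large (though finite) subset of $\operatorname{dom}\alpha\setminus A$, and yet the domain and range of $\alpha'\beta'$ must nevertheless coincide with those of $\alpha\beta$. The pigeonhole observation — that the preimages of the finite set $\mathbb{N}\setminus\operatorname{dom}\beta$ under $\alpha$ and $\alpha'$ have the same cardinality and so are forced to coincide once $A$ swallows the preimage under $\alpha$ — is the decisive step that saves the argument, and its dual on the range side (swapping the roles of $\operatorname{dom}\beta\setminus\operatorname{ran}\alpha$ and $\beta'$) is what preserves $\operatorname{ran}\gamma$.
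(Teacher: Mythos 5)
Your proof is correct, and it takes a genuinely different --- and in one respect more careful --- route than the paper's. For the Tychonoff part, the paper argues that every $\mathscr{H}$-class is open-and-closed and homeomorphic to the group of units $H(\mathbb{I})$, which is a topological group in the induced topology, so the whole space is a countable topological sum of Tychonoff spaces; you instead observe that the sets $U_\alpha(F)$ form a clopen base of a Hausdorff topology, giving zero-dimensionality and hence complete regularity directly. Both routes work. For joint continuity of multiplication, however, the paper asserts the inclusion $U_\alpha(\{n_1,\ldots,n_i\})\cdot U_\beta(\{m_1,\ldots,m_i\})\subseteq U_\gamma(\{n_1,\ldots,n_i\})$ with $m_j=(n_j)\alpha$, and this is false as stated: take $\alpha=\mathbb{I}$, let $\beta$ be the idempotent with $\operatorname{dom}\beta=\mathbb{N}\setminus\{1\}$, let $F=\{2\}$, and let $\alpha'$ be the permutation interchanging $1$ and $3$ and fixing all other points; then $\alpha'\in U_{\mathbb{I}}(\{2\})$, yet $\operatorname{dom}(\alpha'\beta)=\mathbb{N}\setminus\{3\}\neq\mathbb{N}\setminus\{1\}=\operatorname{dom}(\alpha\beta)$, so $\alpha'\beta\notin U_{\alpha\beta}(\{2\})$. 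Your enlargement of the control sets --- adjoining $\alpha^{-1}(\mathbb{N}\setminus\operatorname{dom}\beta)$ to $A$ and $\operatorname{dom}\beta\setminus\operatorname{ran}\alpha$ to $B$ --- is exactly what is needed to pin down $\operatorname{dom}(\alpha'\beta')$ and $\operatorname{ran}(\alpha'\beta')$, and your cardinality argument identifying the two preimages is sound (on the range side one does not even need cardinalities: $\operatorname{ran}(\alpha'\beta')=\operatorname{ran}\beta\setminus\big((\operatorname{dom}\beta\setminus\operatorname{ran}\alpha)\beta'\big)$ and $\beta'$ agrees with $\beta$ on $\operatorname{dom}\beta\setminus\operatorname{ran}\alpha\subseteq B$). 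The inversion step is essentially the same in both arguments. In short, your proof establishes the proposition and at the same time repairs the too-small neighbourhoods used in the paper's own verification of continuity.
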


\begin{proof}
Let $\alpha$ and $\beta$ be arbitrary elements of the semigroup
$\mathscr{I}_{\infty}^{\,\Rsh\!\!\!\nearrow}(\mathbb{N})$. We put
$\gamma=\alpha\beta$ and let $F=\{n_1,\ldots,n_i\}$ be a finite
subset of $\operatorname{dom}\gamma$. We denote
$m_1=(n_1)\alpha,\ldots,m_i=(n_i)\alpha$ and
$k_1=(n_1)\gamma,\ldots,k_i=(n_i)\gamma$. Then we get that
$(m_1)\beta=k_1,\ldots,(m_i)\beta=k_i$. Hence we have that
\begin{equation*}
    U_\alpha(\{n_1,\ldots,n_i\})\cdot
    U_\beta(\{m_1,\ldots,m_i\})\subseteq
    U_\gamma(\{n_1,\ldots,n_i\})
\end{equation*}
and
\begin{equation*}
    \big(U_\gamma(\{n_1,\ldots,n_i\})\big)^{-1}\subseteq
    U_{\gamma^{-1}}(\{k_1,\ldots,k_i\}).
\end{equation*}
Therefore the semigroup operation and the inversion are continuous
in
$(\mathscr{I}_{\infty}^{\,\Rsh\!\!\!\nearrow}(\mathbb{N}),\tau_F)$.

We observe that the group of units $H(\mathbb{I})$ of the
semigroup
$\mathscr{I}_{\infty}^{\,\Rsh\!\!\!\nearrow}(\mathbb{N})$ with the
induced topology $\tau_F(H(\mathbb{I}))$ from
$(\mathscr{I}_{\infty}^{\,\Rsh\!\!\!\nearrow}(\mathbb{N}),\tau_F)$
is a topological group (see \cite[pp.~313--314,
Example]{Guran1981} and \cite{KarrasSolitar1956}) and the
definition of the topology $\tau_F$ implies that every
$\mathscr{H}$-class of the semigroup
$\mathscr{I}_{\infty}^{\,\Rsh\!\!\!\nearrow}(\mathbb{N})$ is an
open-and-closed subset of the topological space
$(\mathscr{I}_{\infty}^{\,\Rsh\!\!\!\nearrow}(\mathbb{N}),\tau_F)$.
Therefore Theorem~2.20~\cite{CP} implies that the topological
space
$(\mathscr{I}_{\infty}^{\,\Rsh\!\!\!\nearrow}(\mathbb{N}),\tau_F)$
is homeomorphic to a countable topological sum of topological
copies of $\big(H(\mathbb{I}),\tau_F(H(\mathbb{I}))\big)$. Since
every $T_0$-topological group is a Tychonoff topological space
(see \cite[Theorem~3.10]{Pontryagin1966} or
\cite[Theorem~8.4]{HewittRoos1963}) we conclude that the
topological space
$(\mathscr{I}_{\infty}^{\,\Rsh\!\!\!\nearrow}(\mathbb{N}),\tau_F)$
is Tychonoff too. This completes the proof of the proposition.
\end{proof}

\begin{remark}
We observe that the topology $\tau_F$ on
$\mathscr{I}_{\infty}^{\,\Rsh\!\!\!\nearrow}(\mathbb{N})$ induces
discrete topologies on the subsemigroups
$\mathscr{I}_{\infty}^{\!\nearrow}(\mathbb{N})$ and
$E(\mathscr{I}_{\infty}^{\,\Rsh\!\!\!\nearrow}(\mathbb{N}))$.
\end{remark}

\begin{example}\label{example-4.2c}
We define a topology $\tau_{W\!F}$ on the semigroup
$\mathscr{I}_{\infty}^{\,\Rsh\!\!\!\nearrow}(\mathbb{N})$ as
follows. For every
$\alpha\in\mathscr{I}_{\infty}^{\,\Rsh\!\!\!\nearrow}(\mathbb{N})$
we define a family
\begin{equation*}
    \mathscr{B}_{W\!F}(\alpha)=\{U_\alpha(F)\mid F \mbox{ is a finite
    subset of } \operatorname{dom}\alpha\},
\end{equation*}
where
\begin{equation*}
    U_\alpha(F)=\{\beta\in\mathscr{I}_{\infty}^{\,\Rsh\!\!\!\nearrow}(\mathbb{N})
    \mid \operatorname{dom}\beta\subseteq\operatorname{dom}\alpha \mbox{ and }
    (x)\beta=(x)\alpha \mbox{ for all } x\in F\}.
\end{equation*}
Since conditions (BP1)--(BP3)~\cite{Engelking1989} hold for the
family $\{\mathscr{B}_{W\!F}(\alpha)\}_{\alpha\in
\mathscr{I}_{\infty}^{\,\Rsh\!\!\!\nearrow}(\mathbb{N})}$ we
conclude that the family
$\{\mathscr{B}_{W\!F}(\alpha)\}_{\alpha\in
\mathscr{I}_{\infty}^{\,\Rsh\!\!\!\nearrow}(\mathbb{N})}$ is the
base of the topology $\tau_{W\!F}$ on the semigroup
$\mathscr{I}_{\infty}^{\,\Rsh\!\!\!\nearrow}(\mathbb{N})$.
\end{example}

\begin{proposition}\label{proposition-4.2d}
$(\mathscr{I}_{\infty}^{\,\Rsh\!\!\!\nearrow}(\mathbb{N}),\tau_{W\!F})$
is a Hausdorff topological inverse semigroup.
\end{proposition}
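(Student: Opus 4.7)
The plan is to verify the three remaining properties in order: Hausdorff separation, continuity of the multiplication, and continuity of the inversion, building on the fact that $\{\mathscr{B}_{W\!F}(\alpha)\}$ is a valid base on $\mathscr{I}_{\infty}^{\,\Rsh\!\!\!\nearrow}(\mathbb{N})$.

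Hausdorffness is quick from the domain-containment condition. For distinct $\alpha,\beta\in\mathscr{I}_{\infty}^{\,\Rsh\!\!\!\nearrow}(\mathbb{N})$ either the domains differ, in which case for some $x\in\operatorname{dom}\alpha\setminus\operatorname{dom}\beta$ (say) the basic neighborhoods $U_\alpha(\{x\})$ and $U_\beta(\emptyset)$ are disjoint, since the first forces every member to have $x$ in its domain while the second confines all domains to $\operatorname{dom}\beta$; or the domains coincide but $\alpha$ and $\beta$ differ at some $x$, and then $U_\alpha(\{x\})\cap U_\beta(\{x\})=\emptyset$ by conflicting value constraints at $x$.

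For continuity of multiplication at $(\alpha,\beta)$ with $\gamma=\alpha\beta$, the decisive observation is that the set
\begin{equation*}
D=\operatorname{dom}\alpha\setminus\operatorname{dom}\gamma=\{x\in\operatorname{dom}\alpha : (x)\alpha\notin\operatorname{dom}\beta\}=\alpha^{-1}(\operatorname{rank}\alpha\setminus\operatorname{dom}\beta)
\end{equation*}
is finite, because $\operatorname{rank}\alpha$ and $\operatorname{dom}\beta$ are both co-finite and $\alpha$ is injective. Given $U_\gamma(F)$ with finite $F\subseteq\operatorname{dom}\gamma$, I would take $F_1=F\cup D$ and $F_2=(F)\alpha$ and verify $U_\alpha(F_1)\cdot U_\beta(F_2)\subseteq U_\gamma(F)$. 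Pinning $\alpha'|_{F_1}=\alpha|_{F_1}$ sends every $x\in D$ back to the original value $(x)\alpha\notin\operatorname{dom}\beta\supseteq\operatorname{dom}\beta'$, which excludes $x$ from $\operatorname{dom}(\alpha'\beta')$ and guarantees $\operatorname{dom}(\alpha'\beta')\subseteq\operatorname{dom}\gamma$; the agreement of $\alpha'$ with $\alpha$ on $F$, combined with $F_2\subseteq\operatorname{dom}\beta'$ and $\beta'|_{F_2}=\beta|_{F_2}$, then yields $(x)(\alpha'\beta')=(x)\gamma$ for $x\in F$.

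For continuity of inversion at $\alpha$, given $U_{\alpha^{-1}}(G)$ with $G\subseteq\operatorname{rank}\alpha$ finite, the natural choice is $F=(G)\alpha^{-1}\subseteq\operatorname{dom}\alpha$: for $\beta\in U_\alpha(F)$ the equality $(x)\beta=(x)\alpha$ on $F$ translates directly into $(y)\beta^{-1}=(y)\alpha^{-1}$ on $G$, while the domain condition $\operatorname{dom}\beta^{-1}=\operatorname{rank}\beta\subseteq\operatorname{rank}\alpha=\operatorname{dom}\alpha^{-1}$ is handled by exploiting the symmetric role of $\operatorname{dom}$ and $\operatorname{rank}$ under inversion in $\mathscr{I}_{\infty}^{\,\Rsh\!\!\!\nearrow}(\mathbb{N})$. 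The main obstacle, in my view, is the multiplication step: correctly isolating the finite ``dangerous set'' $D$ and confirming that pinning $\alpha'$ on $D$ really closes off every escape route from $\operatorname{dom}\gamma$. Once $D$ is identified, the remaining bookkeeping with partial maps is routine.
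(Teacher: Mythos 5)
Your Hausdorff argument and your treatment of the multiplication are correct; in fact the multiplication step is more careful than the paper's own proof. The paper claims $U_\alpha(F)\cdot U_\beta((F)\alpha)\subseteq U_{\alpha\beta}(F)$ outright, but this inclusion fails for $\tau_{W\!F}$: an $\alpha'\in U_\alpha(F)$ may move a point of $\operatorname{dom}\alpha\setminus\operatorname{dom}(\alpha\beta)$ into $\operatorname{dom}\beta'$, so that $\operatorname{dom}(\alpha'\beta')\not\subseteq\operatorname{dom}(\alpha\beta)$. (Take $\alpha=\mathbb{I}$, $\beta$ the identity of $\mathbb{N}\setminus\{5\}$, $F=\{1\}$, $\beta'=\beta$ and $\alpha'$ the transposition of $5$ and $6$; then $5\in\operatorname{dom}(\alpha'\beta')\setminus\operatorname{dom}(\alpha\beta)$.) Your finite set $D=\operatorname{dom}\alpha\setminus\operatorname{dom}\gamma$, adjoined to $F$, is exactly the device needed to close this off, and your verification of it is sound.

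The genuine gap is in the inversion step, and it cannot be repaired. Membership in $U_\alpha(F)$ constrains only $\operatorname{dom}\beta$ and the values on $F$; it puts no constraint on $\operatorname{rank}\beta$, so the ``symmetric role of $\operatorname{dom}$ and $\operatorname{rank}$ under inversion'' gives you nothing: $\operatorname{dom}\beta\subseteq\operatorname{dom}\alpha$ does not imply $\operatorname{rank}\beta\subseteq\operatorname{rank}\alpha$. Concretely, let $\gamma$ be the identity of $\mathbb{N}\setminus\{1\}$, so that $\gamma^{-1}=\gamma$, and consider the neighbourhood $U_{\gamma^{-1}}(\varnothing)$. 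For any finite $F\subseteq\operatorname{dom}\gamma$ pick $n\notin F\cup\{1\}$ and let $\beta$ fix $\mathbb{N}\setminus\{1,n\}$ pointwise and send $n$ to $1$; then $\beta\in\mathscr{I}_{\infty}^{\,\Rsh\!\!\!\nearrow}(\mathbb{N})$ and $\beta\in U_\gamma(F)$, yet $1\in\operatorname{rank}\beta=\operatorname{dom}\beta^{-1}$, so $\beta^{-1}\notin U_{\gamma^{-1}}(\varnothing)$. Hence inversion is not continuous at $\gamma$, and $(\mathscr{I}_{\infty}^{\,\Rsh\!\!\!\nearrow}(\mathbb{N}),\tau_{W\!F})$ is a Hausdorff topological semigroup but not a topological \emph{inverse} semigroup. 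The paper's own proof asserts $\big(U_\gamma(F)\big)^{-1}\subseteq U_{\gamma^{-1}}((F)\gamma)$ and suffers from the same defect, so the flaw lies in the proposition itself rather than only in your write-up; but as a proof of the stated claim your inversion step does fail, precisely at the point you deferred to ``symmetry.''
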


\begin{proof}
Let $\alpha$ and $\beta$ be arbitrary elements of the semigroup
$\mathscr{I}_{\infty}^{\,\Rsh\!\!\!\nearrow}(\mathbb{N})$. We put
$\gamma=\alpha\beta$ and let $F=\{n_1,\ldots,n_i\}$ be a finite
subset of $\operatorname{dom}\gamma$. We denote
$m_1=(n_1)\alpha,\ldots,m_i=(n_i)\alpha$ and
$k_1=(n_1)\gamma,\ldots,k_i=(n_i)\gamma$. Then we get that
$(m_1)\beta=k_1,\ldots,(m_i)\beta=k_i$. Hence we have that
\begin{equation*}
    U_\alpha(\{n_1,\ldots,n_i\})\cdot
    U_\beta(\{m_1,\ldots,m_i\})\subseteq
    U_\gamma(\{n_1,\ldots,n_i\})
\end{equation*}
and
\begin{equation*}
    \big(U_\gamma(\{n_1,\ldots,n_i\})\big)^{-1}\subseteq
    U_{\gamma^{-1}}(\{k_1,\ldots,k_i\}).
\end{equation*}
Therefore the semigroup operation and the inversion are continuous
in
$(\mathscr{I}_{\infty}^{\,\Rsh\!\!\!\nearrow}(\mathbb{N}),\tau_{W\!F})$.

Later we shall show that the topology $\tau_{W\!F}$ is Hausdorff.
Let $\alpha$ and $\beta$ be arbitrary distinct points of the space
$(\mathscr{I}_{\infty}^{\,\Rsh\!\!\!\nearrow}(\mathbb{N}),\tau_{W\!F})$.
Then only one of the following conditions holds:
\begin{itemize}
    \item[$(i)$]
    $\operatorname{dom}\alpha=\operatorname{dom}\beta$;

    \item[$(ii)$]
    $\operatorname{dom}\alpha\neq\operatorname{dom}\beta$.
\end{itemize}

In case $\operatorname{dom}\alpha=\operatorname{dom}\beta$ we have
that there exists $x\in\operatorname{dom}\alpha$ such that
$(x)\alpha\neq(x)\beta$. The definition of the topology
$\tau_{W\!F}$ implies that $U_\alpha(\{x\})\cap
U_\beta(\{x\})=\varnothing$.

If $\operatorname{dom}\alpha\neq\operatorname{dom}\beta$, then
only one of the following conditions holds:
\begin{itemize}
    \item[$(a)$]
    $\operatorname{dom}\alpha\subsetneqq\operatorname{dom}\beta$;

    \item[$(b)$]
    $\operatorname{dom}\beta\subsetneqq\operatorname{dom}\alpha$;

    \item[$(c)$]
    $\operatorname{dom}\alpha\setminus\operatorname{dom}\beta\neq\varnothing$
    and
    $\operatorname{dom}\beta\setminus\operatorname{dom}\alpha\neq\varnothing$.
\end{itemize}

Suppose that case $(a)$ holds. Let
$x\in\operatorname{dom}\beta\setminus\operatorname{dom}\alpha$ and
$y\in\operatorname{dom}\alpha$. The definition of the topology
$\tau_{W\!F}$ implies that  $U_\alpha(\{y\})\cap
U_\beta(\{x\})=\varnothing$.

Case $(b)$ is similar to $(a)$.

Suppose that case $(c)$ holds. Let
$x\in\operatorname{dom}\beta\setminus\operatorname{dom}\alpha$ and
$y\in\operatorname{dom}\alpha\setminus\operatorname{dom}\beta$. The
definition of the topology $\tau_{W\!F}$ implies that
$U_\alpha(\{y\})\cap U_\beta(\{x\})=\varnothing$.

This completes the proof of the proposition.
\end{proof}

\begin{remark}
We observe that the topology $\tau_{W\!F}$ on
$\mathscr{I}_{\infty}^{\,\Rsh\!\!\!\nearrow}(\mathbb{N})$ induces
non-discrete topologies on the semigroup
$\mathscr{I}_{\infty}^{\!\nearrow}(\mathbb{N})$ and the semilattice
$E(\mathscr{I}_{\infty}^{\,\Rsh\!\!\!\nearrow}(\mathbb{N}))$.
Moreover, every $\mathscr{H}$-class of the semigroup
$(\mathscr{I}_{\infty}^{\,\Rsh\!\!\!\nearrow}(\mathbb{N}),\tau_{W\!F})$
is homeomorphic to every $\mathscr{H}$-class of the semigroup
$(\mathscr{I}_{\infty}^{\,\Rsh\!\!\!\nearrow}(\mathbb{N}),\tau_{W})$
\end{remark}

The proof of the following proposition is similar to
Theorem~\ref{theorem-3.4}:

\begin{proposition}\label{proposition-4.3}
Every Hausdorff Baire topology $\tau$ on a countable group $G$
such that left (right) translations in $(G,\tau)$ are continuous
is discrete.
\end{proposition}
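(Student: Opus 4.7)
The plan is to mimic closely the proof of Theorem~\ref{theorem-3.4}: first produce an isolated point via the Baire property, then propagate isolatedness to every element of $G$ using the group homogeneity coming from continuous translations.

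First I would locate an isolated point. Since $(G,\tau)$ is Hausdorff (hence $T_1$), every singleton $\{g\}$, $g\in G$, is closed, so $\overline{\{g\}}=\{g\}$. The countable family $\Gamma=\{\{g\}\mid g\in G\}$ covers $G$. If every singleton were nowhere dense, then by the definition of a Baire space the union $\bigcup_{g\in G}\{g\}=G$ would be codense in $G$, which is absurd. Hence some $\{t\}$ is not nowhere dense; as the only nonempty open subset of a singleton is the singleton itself, this forces $\{t\}$ to be open, so $t$ is an isolated point of $(G,\tau)$.

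Next I would use the group structure to conclude that every point is isolated. Suppose left translations are continuous (the right-translation case is symmetric). For an arbitrary $g\in G$, the left translation $\lambda_{gt^{-1}}\colon x\mapsto (gt^{-1})x$ is continuous by hypothesis, and its set-theoretic inverse is $\lambda_{tg^{-1}}$, which is also continuous for the same reason. Thus $\lambda_{gt^{-1}}$ is a self-homeomorphism of $(G,\tau)$ sending $t$ to $g$, and therefore carries the open singleton $\{t\}$ onto the open singleton $\{g\}$. So $g$ is isolated, and since $g$ was arbitrary, $(G,\tau)$ is discrete.

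I do not anticipate a serious obstacle. The only subtlety is observing that continuity of left (right) translations in a group automatically promotes each translation to a self-homeomorphism, which follows at once from the existence of group inverses; this is exactly what allows us to replace the appeal to Lemma~\ref{lemma-3.3} (and its $\textsf{FS}$-property hypothesis) used in the analogous argument for Theorem~\ref{theorem-3.4} by a direct homogeneity argument available in any group.
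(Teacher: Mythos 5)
Your proof is correct and follows the same blueprint the paper intends (its proof is only the remark that the argument is ``similar to Theorem~\ref{theorem-3.4}''): use the Baire property on the countable cover by singletons to produce an isolated point, then spread isolatedness by translations. Your replacement of the Lemma~\ref{lemma-3.3} finite-fibre step by the observation that each translation is a self-homeomorphism is just the natural specialization of that step to a group, so the two arguments essentially coincide.
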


\begin{theorem}\label{theorem-4.4}
Let $S$ be a topological semigroup which contains a dense discrete
subspace $A$ such that every equations $a\cdot x=b$ and $y\cdot
c=d$ have finitely many solutions in $A$. Then $I=S\setminus A$ is
an ideal of $S$.
\end{theorem}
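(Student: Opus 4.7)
The plan is to prove $I=S\setminus A$ is a two-sided ideal by first upgrading the discreteness of $A$ as a subspace to the much stronger property that every point of $A$ is isolated in the ambient space $S$; once that upgrade is established, continuity of translations combined with the finite-solvability hypothesis yields the ideal property almost for free. To carry out the upgrade, fix $a\in A$ and choose an open $U\subseteq S$ with $U\cap A=\{a\}$, which exists because $A$ is discrete as a subspace. Since $S$ is Hausdorff, $\{a\}$ is closed, so $U\setminus\{a\}$ is open; but $U\setminus\{a\}$ is disjoint from $A$, and density of $A$ then forces $U\setminus\{a\}=\varnothing$. Hence $\{a\}=U$ is open in $S$, i.e.\ every element of $A$ is an isolated point of $S$.

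Next, to show $S\cdot I\subseteq I$, I would suppose for contradiction that $s\cdot t=a\in A$ for some $s\in S$ and $t\in I$. Since $\{a\}$ is open by the previous step and the left translation $L_s\colon y\mapsto s\cdot y$ is continuous (as $S$ is a topological semigroup), the preimage $W=L_s^{-1}(\{a\})$ is an open neighbourhood of $t$ on which $L_s$ is identically equal to $a$. Because $t\notin A$ while $A$ is dense in the Hausdorff space $S$, the point $t$ is a limit point of $A$, and hence every neighbourhood of $t$ — in particular $W$ — contains infinitely many points of $A$. But then infinitely many $w\in A$ satisfy $s\cdot w=a$, contradicting the hypothesis that the equation $s\cdot x=a$ has only finitely many solutions in $A$. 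The inclusion $I\cdot S\subseteq I$ follows by the symmetric argument using the continuity of the right translation $R_s\colon y\mapsto y\cdot s$ and the corresponding hypothesis on $y\cdot c=d$.

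The only delicate point is the first step: density of $A$ together with Hausdorffness conspires to promote subspace-discreteness of $A$ into true isolation of its points in $S$, so that $\{a\}$ is itself open in $S$ and the finite-solvability hypothesis becomes directly applicable through the preimage of $\{a\}$ under a translation. Everything beyond that is a routine application of joint continuity of the semigroup operation and the fact that, in a Hausdorff space, a point not belonging to a dense set must be a limit point of that set in the strong sense that every neighbourhood of it meets the set in infinitely many points.
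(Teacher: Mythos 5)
Your first step --- upgrading subspace-discreteness of $A$ to the statement that each $a\in A$ is isolated in $S$, so that $\{a\}$ is open in $S$ --- is correct and is exactly what the paper extracts from Engelking; your direct argument (that $U\setminus\{a\}$ is open, disjoint from the dense set $A$, hence empty) is clean. The mixed cases are also handled correctly: for $s\in A$ and $t\in I$ with $s\cdot t=a\in A$, the set $L_s^{-1}(\{a\})$ is a neighbourhood of $t$, it meets $A$ in infinitely many points because $t\notin A$ and $A$ is dense in a $T_1$ space, and each such point $w$ solves $s\cdot x=a$ with coefficients $s,a\in A$, contradicting the hypothesis; symmetrically for $I\cdot A$.

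The gap is in the case $s,t\in I$. There you invoke finiteness of the solution set of $s\cdot x=a$ with $s\in I=S\setminus A$, but the hypothesis can only be read as controlling equations whose coefficients lie in $A$: that is the reading under which Corollary~\ref{corollary-4.5} follows from the theorem, since Proposition~\ref{proposition-2.2} supplies finite solvability only for equations $\alpha\cdot\chi=\beta$ with $\alpha,\beta\in\mathscr{I}_{\infty}^{\,\Rsh\!\!\!\nearrow}(\mathbb{N})=A$, and nothing is known about translations by points of the remainder $I$. Your argument, which uses only separate continuity, cannot repair this, because when both factors lie in $I$ neither $L_s$ nor $R_t$ has its fixed coefficient in $A$. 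The paper closes this case with joint continuity: since $\{a\}$ is open, there are neighbourhoods $U(s)$ and $U(t)$ with $U(s)\cdot U(t)\subseteq\{a\}$; choosing $u\in U(s)\cap A$ by density, every one of the infinitely many $w\in U(t)\cap A$ satisfies $u\cdot w=a$ with $u,a\in A$, which does contradict the hypothesis. Add that step (or explicitly strengthen the hypothesis so that the coefficients range over all of $S$, at the cost of no longer covering the intended applications), and your proof is complete.
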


\begin{proof}
Suppose that $I$ is not an ideal of $S$. Then at least one of the
following conditions holds:
\begin{equation*}
    1)~IA\nsubseteq I, \qquad 2)~AI\nsubseteq I, \qquad \mbox{or}
    \qquad 3)~II\nsubseteq I.
\end{equation*}
Since $A$ is a discrete dense subspace of $S$,
Theorem~3.5.8~\cite{Engelking1989} implies that $A$ is an open
subspace of $S$. Suppose there exist $a\in A$ and $b\in I$ such
that $b\cdot a=c\notin I$. Since $A$ is a dense open discrete
subspace of $S$ the continuity of the semigroup operation in $S$
implies that there exists an open neighbourhood $U(b)$ of $b$ in
$S$ such that $U(b)\cdot \{a\}=\{c\}$. But the equation $a\cdot
x=b$ and $y\cdot c=d$ have finitely many solutions in $A$. This
contradicts to the assumption that $a,b\in S\setminus A$.
Therefore $b\cdot a=c\in I$ and hence $IA\subseteq I$. The proof
of the inclusion $AI\subseteq I$ is similar.

Suppose there exist $a,b\in I$ such that $a\cdot b=c\notin I$. Since
$A$ is a dense open discrete subspace of $S$ the continuity of the
semigroup operation in $S$ implies that there exist open
neighbourhoods $U(a)$ and $U(b)$ of $a$ and $b$ in $S$,
respectively, such that $U(a)\cdot U(b)=\{c\}$. But the equations
$x\cdot a=c$ and $y\cdot c=d$ have finitely many solutions in $A$.
This contradicts to the assumption that $a,b\in S\setminus A$.
Therefore $a\cdot b=c\in I$ and hence $II\subseteq I$.
\end{proof}

Theorem~\ref{theorem-4.4} implies
Corollaries~\ref{corollary-4.5} and \ref{corollary-4.6}:

\begin{corollary}\label{corollary-4.5}
Let $S$ be a topological semigroup which contains a dense discrete
subsemigroup
$\mathscr{I}_{\infty}^{\,\Rsh\!\!\!\nearrow}(\mathbb{N})$. If
$I=S\setminus\mathscr{I}_{\infty}^{\,\Rsh\!\!\!\nearrow}(\mathbb{N})
\neq\varnothing$ then $I$ is an ideal of $S$.
\end{corollary}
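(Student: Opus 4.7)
The plan is to invoke Theorem~\ref{theorem-4.4} directly with $A=\mathscr{I}_{\infty}^{\,\Rsh\!\!\!\nearrow}(\mathbb{N})$. The hypotheses of that theorem require three things about $A$: it is a discrete subspace of $S$, it is dense in $S$, and every equation $a\cdot x=b$ and $y\cdot c=d$ has only finitely many solutions inside $A$. The first two are assumed outright in the statement of the corollary, so the only real work is to check the third condition.

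For the finiteness of the solution sets, I would cite Proposition~\ref{proposition-2.2}, which says precisely that for every $\alpha,\beta\in\mathscr{I}_{\infty}^{\,\Rsh\!\!\!\nearrow}(\mathbb{N})$ each of the sets $\{\chi\mid\alpha\cdot\chi=\beta\}$ and $\{\chi\mid\chi\cdot\alpha=\beta\}$ is finite. Equivalently, this is the $\textsf{F}$-part of the $\textsf{FS}$-property recorded in Remark~\ref{remark-3.2}(3). One small bookkeeping remark: Theorem~\ref{theorem-4.4} quantifies over $a,b,c,d\in S$, but we only need the finite-fiber property for $a,c\in A$ with the equations considered inside $A$; an inspection of the proof of Theorem~\ref{theorem-4.4} shows that the only case used is when the relevant coefficient already lies in $A$, so Proposition~\ref{proposition-2.2} suffices.

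Having verified the hypotheses, Theorem~\ref{theorem-4.4} yields that $I=S\setminus \mathscr{I}_{\infty}^{\,\Rsh\!\!\!\nearrow}(\mathbb{N})$ is an ideal of $S$ as soon as $I\neq\varnothing$, which is exactly the conclusion of the corollary. There is no real obstacle here, so the entire proof collapses to one or two lines; the only thing worth underlining is the appeal to Proposition~\ref{proposition-2.2} to supply the finite-to-one property of translations that drives Theorem~\ref{theorem-4.4}.
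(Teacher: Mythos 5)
Your proposal is correct and matches the paper exactly: the authors simply state that Theorem~\ref{theorem-4.4} implies Corollary~\ref{corollary-4.5}, with the finite-solution hypothesis supplied (implicitly) by Proposition~\ref{proposition-2.2}, i.e.\ the $\textsf{F}$-part of the $\textsf{FS}$-property recorded in Remark~\ref{remark-3.2}. Your extra remark about the quantification over $A$ rather than all of $S$ is a sensible clarification but does not change the argument.
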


\begin{corollary}[\cite{GutikRepovs2010?}]\label{corollary-4.6}
Let $S$ be a topological semigroup which contains a dense discrete
subsemigroup $\mathscr{I}_{\infty}^{\!\nearrow}(\mathbb{N})$. If
$I=S\setminus\mathscr{I}_{\infty}^{\!\nearrow}(\mathbb{N})
\neq\varnothing$ then $I$ is an ideal of $S$.
\end{corollary}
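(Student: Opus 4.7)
The plan is to apply Theorem~\ref{theorem-4.4} directly, taking the dense discrete subspace to be $A=\mathscr{I}_{\infty}^{\!\nearrow}(\mathbb{N})$ itself. For this to succeed we only need to verify that every equation of the form $a\cdot x=b$ or $y\cdot c=d$, with $a,b,c,d\in\mathscr{I}_{\infty}^{\!\nearrow}(\mathbb{N})$, admits only finitely many solutions within $\mathscr{I}_{\infty}^{\!\nearrow}(\mathbb{N})$, i.e.\ that $\mathscr{I}_{\infty}^{\!\nearrow}(\mathbb{N})$ enjoys the $\textsf{F}$-property of Definition~\ref{definitio-3.1}. This is explicitly recorded in Remark~\ref{remark-3.2}(3) as part of the $\textsf{FS}$-property, so no further algebraic work is needed.

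Granting the $\textsf{F}$-property, Theorem~\ref{theorem-4.4} yields at once that $I=S\setminus\mathscr{I}_{\infty}^{\!\nearrow}(\mathbb{N})$ is an ideal of $S$, which is exactly the conclusion of the corollary. The argument is in complete parallel with Corollary~\ref{corollary-4.5}, where the same specialization of Theorem~\ref{theorem-4.4} is carried out for the larger semigroup $\mathscr{I}_{\infty}^{\,\Rsh\!\!\!\nearrow}(\mathbb{N})$; there is no genuine obstacle, since the corollary is a pure instantiation of the theorem.

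If one prefers a self-contained verification of the $\textsf{F}$-property for $\mathscr{I}_{\infty}^{\!\nearrow}(\mathbb{N})$, it runs in direct analogy with the proof of Proposition~\ref{proposition-2.2}: given $\alpha,\beta\in\mathscr{I}_{\infty}^{\!\nearrow}(\mathbb{N})$, any $\chi$ with $\alpha\cdot\chi=\beta$ must coincide with $\alpha^{-1}\cdot\beta$ on $\operatorname{dom}(\alpha^{-1}\alpha)$, and since every element of $\mathscr{I}_{\infty}^{\!\nearrow}(\mathbb{N})$ is monotone and cofinite, only finitely many such $\chi$ can extend this prescribed restriction. The equation $y\cdot c=d$ is handled symmetrically after passing to inverses, and the corollary follows.
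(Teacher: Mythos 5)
Your proposal is correct and is essentially identical to the paper's own treatment: the paper derives Corollary~\ref{corollary-4.6} by the same direct specialization of Theorem~\ref{theorem-4.4} to $A=\mathscr{I}_{\infty}^{\!\nearrow}(\mathbb{N})$, with the required finiteness of solution sets supplied by the $\textsf{F}$-property noted in Remark~\ref{remark-3.2}. Your optional self-contained verification of that property, modeled on Proposition~\ref{proposition-2.2}, is a harmless and valid addition.
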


\begin{proposition}\label{proposition-4.7}
Let $S$ be a topological semigroup which contains a dense discrete
subsemigroup
$\mathscr{I}_{\infty}^{\,\Rsh\!\!\!\nearrow}(\mathbb{N})$. Then for
every $c\in\mathscr{I}_{\infty}^{\,\Rsh\!\!\!\nearrow}(\mathbb{N})$
the set
\begin{equation*}
    D_c(A)=\{(x,y)\in\mathscr{I}_{\infty}^{\,\Rsh\!\!\!\nearrow}(\mathbb{N})
    \times\mathscr{I}_{\infty}^{\,\Rsh\!\!\!\nearrow}(\mathbb{N})\mid x\cdot y=c\}
\end{equation*}
is a closed-and-open subset of $S\times S$.
\end{proposition}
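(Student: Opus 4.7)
My plan is to prove openness and closedness of $D_c(A)$ separately, where I write $A=\mathscr{I}_{\infty}^{\,\Rsh\!\!\!\nearrow}(\mathbb{N})$ and denote by $\mu\colon S\times S\to S$ the (jointly) continuous multiplication. The whole argument rests on the fact that $A$ is dense and discrete in $S$, combined with Corollary~\ref{corollary-4.5} which tells us that $I:=S\setminus A$, if non-empty, is an ideal.

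For openness, I first invoke Theorem~3.5.8 of \cite{Engelking1989} (exactly as used in Theorem~\ref{theorem-4.4}) to conclude that a dense discrete subspace $A$ of $S$ is open in $S$. Since $A$ is discrete and open in $S$, each singleton $\{a\}\subseteq A$ is open in $S$, so each singleton $\{(x,y)\}\subseteq A\times A$ is open in $S\times S$. As $D_c(A)\subseteq A\times A$, it is the union of its singletons and therefore open in $S\times S$.

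For closedness, I would use the standard fact that in a Hausdorff space $S$ the point $\{c\}$ is closed, so $\mu^{-1}(c)$ is closed in $S\times S$ by continuity of $\mu$. The key identification to make is $D_c(A)=\mu^{-1}(c)$. The inclusion $D_c(A)\subseteq\mu^{-1}(c)$ is trivial from the definition. For the reverse inclusion, take $(x,y)\in S\times S$ with $x\cdot y=c$. If $I=\varnothing$ there is nothing to show. Otherwise, Corollary~\ref{corollary-4.5} gives that $I$ is an ideal of $S$; since $c\in A$, that is, $c\notin I$, neither $x$ nor $y$ can lie in $I$, for otherwise $x\cdot y\in I$, contradicting $xy=c$. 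Hence $x,y\in A$ and $(x,y)\in D_c(A)$, which gives the reverse inclusion and completes the argument that $D_c(A)$ is closed.

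The only conceptual step is the ideal-trap argument that forces both coordinates of any preimage of $c$ under $\mu$ to land in $A$; everything else is a routine combination of discreteness, openness of $A$, and continuity of $\mu$ in a Hausdorff ambient space. I expect no real obstacle, since Corollary~\ref{corollary-4.5} does the heavy lifting.
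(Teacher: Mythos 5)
Your proof is correct and follows essentially the same route as the paper's: openness comes from the fact that a dense discrete subspace is open (so $A\times A$ is open and discrete in $S\times S$), and closedness rests on the ideal-trap argument of Corollary~\ref{corollary-4.5} combined with continuity of multiplication. The only cosmetic difference is that you package the closedness step as the identity $D_c(A)=\mu^{-1}(c)$ with $\{c\}$ closed in the Hausdorff space $S$, whereas the paper argues by contradiction that $D_c(A)$ can have no accumulation point in $S\times S$; the underlying idea is identical.
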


\begin{proof}
Since $\mathscr{I}_{\infty}^{\,\Rsh\!\!\!\nearrow}(\mathbb{N})$ is a
discrete subspace of $S$ we have that $D_c(A)$ is an open subset of
$S\times S$.

Suppose that there exists
$c\in\mathscr{I}_{\infty}^{\,\Rsh\!\!\!\nearrow}(\mathbb{N})$ such
that $D_c(A)$ is a non-closed subset of $S\times S$. Then there
exists an accumulation point $(a,b)\in S\times S$ of the set
$D_c(A)$. The continuity of the semigroup operation in $S$ implies
that $a\cdot b=c$. But
$\mathscr{I}_{\infty}^{\,\Rsh\!\!\!\nearrow}(\mathbb{N})\times
\mathscr{I}_{\infty}^{\,\Rsh\!\!\!\nearrow}(\mathbb{N})$ is a
discrete subspace of $S\times S$ and hence by
Corollary~\ref{corollary-4.5} the points $a$ and $b$ belong to the
ideal $I=S\setminus
\mathscr{I}_{\infty}^{\,\Rsh\!\!\!\nearrow}(\mathbb{N})$ and hence
$p\cdot q\in S\setminus
\mathscr{I}_{\infty}^{\,\Rsh\!\!\!\nearrow}(\mathbb{N})$ cannot be
equal to $c$.
\end{proof}

A topological space $X$ is defined to be \emph{pseudocompact} if
each locally finite open cover of $X$ is finite. According to
\cite[Theorem~3.10.22]{Engelking1989} a Tychonoff topological space
$X$ is pseudocompact if and only if each continuous real-valued
function on $X$ is bounded.

\begin{theorem}\label{theorem-4.8}
If a topological semigroup $S$ contains
$\mathscr{I}_{\infty}^{\,\Rsh\!\!\!\nearrow}(\mathbb{N})$ as a dense
discrete subsemigroup then the square $S\times S$ is not
pseudocompact.
\end{theorem}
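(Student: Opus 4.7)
The plan is to argue by contradiction: assume $S\times S$ is pseudocompact. If $I:=S\setminus\mathscr{I}_{\infty}^{\,\Rsh\!\!\!\nearrow}(\mathbb{N})=\varnothing$, then $S$ itself is a countably infinite discrete space and $S\times S$ is discrete countably infinite, hence not pseudocompact; so we may assume $I\neq\varnothing$. By Corollary~\ref{corollary-4.5} the set $I$ is then an ideal of $S$, and since $T:=\mathscr{I}_{\infty}^{\,\Rsh\!\!\!\nearrow}(\mathbb{N})$ is a dense discrete subspace of the Hausdorff space $S$, Theorem~3.5.8 of~\cite{Engelking1989} (applied exactly as in the proof of Theorem~\ref{theorem-4.4}) shows that $T$ is open in $S$; consequently $T\times T$ is an open discrete subspace of $S\times S$, and every singleton in $T\times T$ is open in $S\times S$.

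The strategy is to exhibit an infinite locally finite family of nonempty open subsets of $S\times S$, which will contradict the definition of pseudocompactness after adjoining $S\times S$ as a single extra member to produce a genuine cover. By Proposition~\ref{proposition-2.3} the maximal subgroup $H(\mathbb{I})\cong\textsf{S}_\infty(\mathbb{N})$ is countably infinite, so one may pick pairwise distinct elements $\alpha_1,\alpha_2,\alpha_3,\ldots\in H(\mathbb{I})$ and set
\[
V_n=\{(\alpha_n,\alpha_n^{-1})\}\subseteq T\times T\subseteq S\times S.
\]
Each $V_n$ is a nonempty open subset of $S\times S$, and the $V_n$ are pairwise disjoint.

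The key step is to verify that $\{V_n\}_{n\in\mathbb{N}}$ is locally finite in $S\times S$; I split by whether $rs=\mathbb{I}$ at a given point $(r,s)\in S\times S$. If $rs\neq\mathbb{I}$, then Hausdorffness makes $\{\mathbb{I}\}$ closed, so $W:=S\setminus\{\mathbb{I}\}$ is an open neighbourhood of $rs$, and continuity of the multiplication $\mu\colon S\times S\to S$ yields that $\mu^{-1}(W)$ is an open neighbourhood of $(r,s)$ disjoint from every $V_n$, because $\mu(\alpha_n,\alpha_n^{-1})=\mathbb{I}\notin W$. If instead $rs=\mathbb{I}$, then both $r$ and $s$ must lie in $T$, for $r\in I$ or $s\in I$ would yield $rs\in I$ by the ideal property, contradicting $\mathbb{I}\in T$; hence $\{(r,s)\}$ is itself an open neighbourhood of $(r,s)$, and it meets $V_n$ for at most the unique index $n$ (if any) with $r=\alpha_n$. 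In either case, $(r,s)$ has a neighbourhood meeting at most one $V_n$, so $\{V_n\}\cup\{S\times S\}$ is an infinite locally finite open cover of $S\times S$, contradicting pseudocompactness.

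The main obstacle is choosing a family whose local finiteness can actually be verified. Note that $S\times S$ being pseudocompact would force $S$ to be pseudocompact (as a continuous image under projection), so every infinite subset of $T$ must accumulate somewhere in $I$; this rules out naive candidates such as $\{\mu^{-1}(c_n)\}$ for generic distinct $c_n\in T$, since the limit points in $I$ obstruct local finiteness. The trick is to concentrate on the fibre $\mu^{-1}(\mathbb{I})$: by fixing the target at the isolated idempotent $\mathbb{I}$ and exploiting the ideal property to exclude any boundary pair $(r,s)\in S\times S\setminus T\times T$ from having $rs=\mathbb{I}$, the two-case analysis above goes through cleanly.
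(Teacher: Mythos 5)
Your proof is correct and follows essentially the same route as the paper's: the paper invokes Proposition~\ref{proposition-4.7} to obtain that $D_c(A)=\{(x,y)\mid x\cdot y=c\}$ is an infinite closed-and-open discrete subspace of $S\times S$ and then cites an exercise in \cite{Engelking1989}, whereas you inline both steps for $c=\mathbb{I}$, using exactly the same ingredients (the ideal property of $S\setminus\mathscr{I}_{\infty}^{\,\Rsh\!\!\!\nearrow}(\mathbb{N})$ from Corollary~\ref{corollary-4.5} and continuity of multiplication) to verify local finiteness of your family of singletons directly. The only differences are cosmetic: you treat the degenerate case $S=\mathscr{I}_{\infty}^{\,\Rsh\!\!\!\nearrow}(\mathbb{N})$ explicitly and check the locally-finite-cover criterion by hand rather than by citation.
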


\begin{proof}
Since the square $S\times S$ contains an infinite closed-and-open
discrete subspace $D_c(A)$, we conclude that $S\times S$ fails to be
pseudocompact (see \cite[Ex.~3.10.F(d)]{Engelking1989} or
\cite{Colmez1951}).
\end{proof}

\begin{remark}\label{remark-4.9}
Recall that, a topological semigroup $S$ is called $\Gamma$-compact
if for every $x\in S$ the closure of the set $\{x,x^2,x^3,\ldots\}$
is a compactum in $S$ (see \cite{HildebrantKoch1988}). Since the
semigroup $\mathscr{I}_{\infty}^{\!\nearrow}(\mathbb{N})$ contains
the bicyclic semigroup as a subsemigroup the results obtained in
\cite{AHK}, \cite{BanakhDimitrovaGutik2009},
\cite{BanakhDimitrovaGutik20??}, \cite{GutikRepovs2007},
\cite{HildebrantKoch1988} imply that \emph{if a topological
semigroup $S$ satisfies one of the following conditions: $(i)$~$S$
is compact; $(ii)$~$S$ is $\Gamma$-compact; $(iii)$~the square
$S\times S$ is countably compact; $(iv)$~$S$ is a countably compact
topological inverse semigroup; or $(v)$~the square $S\times S$ is a
Tychonoff pseudocompact space, then $S$ does not contain the
semigroup $\mathscr{I}_{\infty}^{\!\nearrow}(\mathbb{N})$ and hence
the semigroup
$\mathscr{I}_{\infty}^{\,\Rsh\!\!\!\nearrow}(\mathbb{N})$.}
\end{remark}

The proof of the following theorem is similar to
Theorem~\ref{theorem-4.8}:

\begin{theorem}\label{theorem-4.10}
If a topological semigroup $S$ contains
$\mathscr{I}_{\infty}^{\!\nearrow}(\mathbb{N})$ as a dense discrete
subsemigroup then the square $S\times S$ is not pseudocompact.
\end{theorem}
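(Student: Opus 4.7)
The plan is to mirror the proof of Theorem~\ref{theorem-4.8} almost verbatim, with $\mathscr{I}_{\infty}^{\!\nearrow}(\mathbb{N})$ playing the role of $\mathscr{I}_{\infty}^{\,\Rsh\!\!\!\nearrow}(\mathbb{N})$. If the complement $I=S\setminus\mathscr{I}_{\infty}^{\!\nearrow}(\mathbb{N})$ is empty, then $S$ is a countably infinite discrete space and is trivially not pseudocompact, so I may assume $I\neq\varnothing$; by Corollary~\ref{corollary-4.6}, $I$ is an ideal of $S$.

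First I would establish the analogue of Proposition~\ref{proposition-4.7}: for every $c\in\mathscr{I}_{\infty}^{\!\nearrow}(\mathbb{N})$ the set
\begin{equation*}
    D_c=\{(x,y)\in\mathscr{I}_{\infty}^{\!\nearrow}(\mathbb{N})\times\mathscr{I}_{\infty}^{\!\nearrow}(\mathbb{N})\mid x\cdot y=c\}
\end{equation*}
is open-and-closed in $S\times S$. Since $\mathscr{I}_{\infty}^{\!\nearrow}(\mathbb{N})$ is a dense discrete subspace of $S$, it is open in $S$ by Theorem~3.5.8 of~\cite{Engelking1989}; hence $\mathscr{I}_{\infty}^{\!\nearrow}(\mathbb{N})\times\mathscr{I}_{\infty}^{\!\nearrow}(\mathbb{N})$ is open and discrete in $S\times S$, so $D_c$ is open. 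For closedness, suppose $(a,b)\in S\times S$ is an accumulation point of $D_c$. Joint continuity of the semigroup operation in $S$ forces $a\cdot b=c$; but if either $a$ or $b$ lies in $I$, then $a\cdot b\in I$, contradicting $c\in\mathscr{I}_{\infty}^{\!\nearrow}(\mathbb{N})$.

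To finish, I would exhibit a concrete $c$ for which $D_c$ is infinite, so that $S\times S$ contains an infinite closed-and-open discrete subspace. Take $c=\mathbb{I}$ and use the bicyclic copy $\mathscr{C}_{\mathbb{N}}(\pi,\sigma)\subseteq\mathscr{I}_{\infty}^{\!\nearrow}(\mathbb{N})$: since $\pi\sigma=\mathbb{I}$ we have $\pi^{k}\sigma^{k}=\mathbb{I}$ for every $k\geqslant 0$, and the pairs $(\pi^{k},\sigma^{k})$ are pairwise distinct elements of $D_{\mathbb{I}}$. Thus $D_{\mathbb{I}}$ is an infinite closed-and-open discrete subspace of $S\times S$, and by \cite[Ex.~3.10.F(d)]{Engelking1989} (or \cite{Colmez1951}) the square $S\times S$ cannot be pseudocompact.

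The argument is entirely parallel to Theorem~\ref{theorem-4.8}; no genuinely new obstacle arises. The only step requiring independent verification is the construction of infinitely many pairs in some single fibre $D_c$, and this is handled uniformly by the bicyclic subsemigroup which is embedded in both $\mathscr{I}_{\infty}^{\!\nearrow}(\mathbb{N})$ and $\mathscr{I}_{\infty}^{\,\Rsh\!\!\!\nearrow}(\mathbb{N})$.
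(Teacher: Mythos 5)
Your proof is correct and takes exactly the route the paper intends: the paper disposes of Theorem~\ref{theorem-4.10} by declaring it ``similar to Theorem~\ref{theorem-4.8}'', i.e., via the analogue of Proposition~\ref{proposition-4.7} and the existence of an infinite closed-and-open discrete subspace of $S\times S$. You even supply a detail the paper leaves implicit, namely that some fibre $D_c$ is infinite, which you verify cleanly with the pairs $(\pi^{k},\sigma^{k})\in D_{\mathbb{I}}$ coming from the bicyclic copy $\mathscr{C}_{\mathbb{N}}(\pi,\sigma)$.
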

%%%%%%%%%%%%%%%%%%%%%%%%%%%%%%%%%%%%%%%%%%%%%%%%%%%%%%%%%%%%%%%


\begin{thebibliography}{99}

\bibitem{Andersen} O.~Andersen, {\em Ein Bericht \"{u}ber die
Struktur abstrakter Halbgruppen}, PhD Thesis, Hamburg, 1952.

\bibitem{AHK} L.~W.~Anderson, R.~P.~Hunter and R.~J.~Koch,
\textit{Some results on stability in semigroups}. Trans. Amer.
Math. Soc. {\bf 117} (1965), 521---529.

\bibitem{BanakhDimitrovaGutik2009} T.~Banakh, S.~Dimitrova and
O.~Gutik, \emph{The Rees-Suschkiewitsch Theorem for simple
topological semigroups}, Mat. Stud. \textbf{31}:2 (2009),
211---218.

\bibitem{BanakhDimitrovaGutik20??} T.~Banakh, S.~Dimitrova and
O.~Gutik, \emph{Embedding the bicyclic semigroup into countably
compact topological semigroups}, Topology Appl. (to appear)
(arXiv:0811.4276).

\bibitem{Beida1980} A.~A.~Be{\u{\i}}da, {\it Continuous inverse
semigroups of open partial homeomorphisms,} Izv. Vyssh. Uchebn.
Zaved. Mat. no. \textbf{1} (1980), 64---65 (in Russian).

\bibitem{CHK} J.~H.~Carruth, J.~A.~Hildebrant and  R.~J.~Koch,
\emph{The Theory of Topological Semigroups}, Vol. I, Marcel
Dekker, Inc., New York and Basel, 1983; Vol. II, Marcel Dekker,
Inc., New York and Basel, 1986.

\bibitem{CP} A.~H.~Clifford and  G.~B.~Preston, \emph{The
Algebraic Theory of Semigroups}, Vol. I., Amer. Math. Soc. Surveys
7, Providence, R.I., 1961; Vol. II., Amer. Math. Soc. Surveys 7,
Providence, R.I., 1967.

\bibitem{Colmez1951} J.~Colmez, \emph{Sur les espaces pr\'{e}compacts},
C. R. Acad. Paris {\bf 233} (1951), 1552---1553.

\bibitem{EberhartSelden1969} C.~Eberhart and J.~Selden, \emph{On
the closure of the bicyclic semigroup}, Trans. Amer. Math. Soc. {\bf
144} (1969), 115---126.

\bibitem{Engelking1989} R.~Engelking, \emph{General Topology},
2nd ed., Heldermann, Berlin, 1989.

\bibitem{GHKLMS} G.~Gierz, K.~H.~Hofmann, K.~Keimel, J.~D.~Lawson,
M.~W.~Mislove and D. S. Scott, \emph{Continuous Lattices and
Domains}, Cambridge Univ. Press, Cambridge, 2003.

\bibitem{Guran1981} I.~I.~Guran, \emph{Topology of an infinite
symmetric group and condensation}, Comment. Math. Univ. Carol.
\textbf{22}:2 (1981), 311---316 (in Russian).

\bibitem{GutikLawsonRepovs2009} O.~Gutik, J.~Lawson, and
D.~Repov\v{s}, {\em Semigroup closures of finite rank symmetric
inverse semigroups}, Semigroup Forum {\bf 78}:2 (2009), 326--336.

\bibitem{GutikPavlyk2005} O.~V.~Gutik and K.~P.~Pavlyk, {\em On
topological semigroups of matrix units}, Semigroup Forum {\bf
71}:3 (2005), 389--400.

\bibitem{GutikPavlykReiter2009} {O.~Gutik, K.~Pavlyk and
A.~Reiter,} {\em Topological semigroups of matrix units and
countably compact Brandt $\lambda^0$-extensions}, Mat. Stud.
\textbf{32}:2 (2009), 115--131.

\bibitem{GutikReiter2009} {O. V. Gutik and A.~R.~Reiter,}
\emph{Symmetric inverse topological semigroups of finite rank
$\leqslant n$}, Mat. Metody Phis.-Mech. Polya. {\bf 53}:3 (2009),
7--14.

\bibitem{GutikReiter2010?} {O.~Gutik and A.~Reiter,}
\emph{On semitopological symmetric inverse semigroups of a bounded
finite rank}, Visnyk Lviv Univ. Ser. Mech.-Math. (2010) (to appear)
(in Ukrainian).

\bibitem{GutikRepovs2007} O.~Gutik and D.~Repov\v{s}, \emph{On
countably compact $0$-simple topological inverse semigroups},
Semigroup Forum \textbf{75}:2 (2007), 464---469.

\bibitem{GutikRepovs2010?} O.~Gutik and D.~Repov\v{s},
\emph{Topological monoids of monotone, injective partial selfmaps
of $\mathbb{N}$ having cofinite domain and image}, Stud. Sci.
Math. Hungar. (to appear).

\bibitem{HewittRoos1963} E.~Hewitt and K.~A.~Roos, \emph{Abstract
Harmonic Analysis}, Vol.~1, Springer, Berlin, 1963.

\bibitem{HildebrantKoch1988} J.~A.~Hildebrant and R.~J.~Koch,
{\it Swelling actions of $\Gamma$-compact semigroups}, Semigroup
Forum {\bf 33} (1988), 65---85.

\bibitem{KarrasSolitar1956} A.~Karras and D. Solitar, \emph{Some
remarks on the infinite symmetric groups}. Math. Z. \textbf{66}
(1956), 64---69.

\bibitem{Orlov1974} S.~D.~Orlov, {\em Topologization of the
generalized group of open partial homeomorphisms of a locally
compact Hausdorff space}, Izv. Vyssh. Uchebn. Zaved. Mat. no.
\textbf{11}(150) (1974), 61--68 (in Russian).

\bibitem{Orlov1974a} S.~D.~Orlov, {\em
On the theory of generalized topological groups}, Theory of
Semigroups and its Applications, Sratov Univ. Press. no.
\textbf{3} (1974), 80--85 (in Russian).


\bibitem{Petrich1984} M.~Petrich, \emph{Inverse Semigroups}, John
Wiley $\&$ Sons, New York, 1984.

\bibitem{Pontryagin1966} L.~S.~Pontryagin, \emph{Topological Groups},
Gordon $\&$ Breach, New York ets, 1966.

\bibitem{Ruppert1984} W.~Ruppert, \emph{Compact Semitopological
Semigroups: An Intrinsic Theory}, Lecture Notes in Mathematics,
Vol.~1079, Springer, Berlin, 1984.

\bibitem{Subbiah1987} S.~Subbiah, {\it The compact-open topology
for semigroups of continuous self-maps,} Semigroup Forum {\bf
35}:1 (1987), 29--33.

\bibitem{Wagner1952} V.~V.~Wagner, \emph{Generalized groups},
Dokl. Akad. Nauk SSSR \textbf{84} (1952), 1119---1122 (in
Russian).

\end{thebibliography}
\end{document}